\newtheorem{lemma}{Lemma}[section]
\newtheorem{theorem}[lemma]{Theorem}
\newtheorem{proposition}[lemma]{Proposition}
\newtheorem{corollary}[lemma]{Corollary}
\newtheorem{example}{Example}
\DeclareMathOperator{\ini}{in}
\DeclareMathOperator{\spann}{span}
\DeclareMathOperator{\nf}{Nf}
\DeclareMathOperator{\supp}{supp}
\DeclareMathOperator{\numbits}{numbits}
\DeclareMathOperator{\Ess}{Ess}
\DeclareMathOperator{\Varord}{Varord}
\DeclareMathOperator{\Occ}{cp}
\DeclareMathOperator{\ov}{ov}
\title{Complexity of comparing monomials and two improvements of the BM-algorithm}
\author{Samuel Lundqvist}
\date{}
\begin{document}

\maketitle

\begin{abstract}
We give a new algorithm for merging sorted lists of monomials. 
Together with a projection technique 
we obtain a new complexity bound for the BM-algorithm.
\end{abstract}

\section{Introduction} \label{sec:intro}

Vanishing ideals of points are of interest in many fields of mathematics --- 
they are used in coding theory, in interpolation problems and even in statistics. 
Recently, the vanishing ideal of a set of affine points has been studied in molecular biology \cite{LaubStigler04}. 

The BM-algorithm \cite{BM} was proposed as a tool to make computations over vanishing ideals of points.
When doing complexity studies of the BM-algorithm, one has to deal with 
arithmetic operations over the ground field and monomial manipulations. The number of arithmetic operations is reported \cite{FGLM,MMM} to be proportional to
$nm^3$, where $n$ denotes the number of variables and $m$ the number of points. The number of integer comparisons needed for the 
monomial manipulations is reported \cite{FGLM,MMM} to be proportional to $n^2m^2$. In the biological applications, 
the coefficients of the points takes  values in a finite field $\mathbb{Z}_p$ and one usually has $m \ll n$. 
Accordingly, one has begun searching for algorithms which are optimized for these situations.
In \cite{JustStigler06}, an algorithm which uses $O(nm^2+m^6)$ operations (arithmetic and integer operations are treated the same) is given, 
while in \cite{JustStigler07}, the same authors sharpened this bound so that it reads $O(nm^2 + pm^4 + pm^3 \log(pm))$ operations, 
where again, arithmetic and integer operations are treated the same.

In this paper, we first make a thorough study of the complexity of comparing monomials. We restrict our analysis to the admissible monomial orders on 
$n$ indeterminates given by invertible matrices with $\mathbb{Z}$-coefficients. These orders associate to each monomial an $n$-vector of integers with the property 
that comparing two monomials is the same as comparing the 
$n$-vectors lexicographically. Although this is a restriction on the set of admissible monomial orders, we remark that earlier complexity studies 
\cite{FGLM, JustStigler07,MMM} have been performed only on a much smaller set of of admissible orders, e.g. lex, deglex and degrevlex. 

To give bounds for the monomial manipulations, we study algorithms for comparing lexicographically sorted $n$-vectors 
and give a fast algorithm for merging lexicographically sorted lists of such vectors. 
Summation of ordered polynomials is one example of a situation where merge algorithms are used and thus, 
our merge algorithm could be used to speed up the computation of 
$S$-polynomials during the computations of a Gr\"{o}bner basis with respect to an ideal given by generators. 

The rest of the paper is focused on the BM-algorithm. We notice that the upper bound for the number of arithmetic operations during 
the BM-algorithm as given in \cite{FGLM} can be sharpened to read 
$$O(nm^2 + \min(m,n)m^3).$$
After introducing a projection technique we show that the upper bound for the time complexity of the monomial manipulation part can be lowered to 
$$O(\min(m,n)m^2 \log(m))$$ 
using one of the monomial orders lex, deglex or degrevlex. The factor $\log(m)$ comes from the model we use --- that adding two integers bounded by $m$ has time complexity $O(\log(m))$. It follows that the bottleneck of the BM-algorithm are the arithmetic operations. 

Our method has better complexity than 
both the standard BM-algorithm and the methods optimized for the situations where $m \ll n$. It is also more general than the methods in 
\cite{JustStigler06, JustStigler07} since we do not assume that $\Bbbk$ is a finite field. 

Finally, we use our methods to give new complexity results for the FGLM-algorithm \cite{FGLM} and for the algorithms concerning ideals defined by functionals given in \cite{MMM}.

Throughout the rest of the paper, we let $S=\Bbbk[x_1, \ldots, x_n]$ denote the polynomial ring in $n$ variables over a field $\Bbbk$. The notion $x^{\alpha}$, where
$\alpha =(\alpha_1, \ldots, \alpha_n)$ will be used as short for $x_1^{\alpha_1} \cdots x_n^{\alpha_n}$. 

\section{Monomial manipulations} \label{sec:mon}
Comparisons of monomials is a crucial part in the complexity analysis of computational algebra. 
However, different complexity models leads to different complexity results and this is problematic when it comes to comparing 
different algorithms. For instance, in \cite{FGLM,JustStigler07}, comparison of two monomials in $n$ variables  with respect to a monomial order 
is assumed to have time complexity $O(n)$, while in  \cite{Abbottetal,MMM}, the comparison of two monomials is assumed to 
be $O(n)$ integer comparisons. If one assumes an integer comparison to be made in constant time, then these complexities agree. But this is an unrealistic assumption,
since the time needed for comparing integers is dependent on the size of the integers. 

After fixing a sound computational model, we will show that for a common class of monomial orders, 
it is possible to compare two monomials of degree bounded by $m$ in time proportional to $n\max(\log(m/n),1)$. 
We then show that it is possible to merge
two lexicographically sorted lists $a$ and $b$ of $n$-tuples in some set $\Sigma$ with $s$ and $t$ elements respectively using at most 
$\min(s,t)+n$ comparisons in $\Sigma$ plus time proportional to
$\max(s,t)\log(n)$. This is better than the expected $\max(s,t)n$ $\Sigma$-comparisons. 
Combining these result, we obtain a method which merges two lists of monomials, 
sorted with respect to lex, deglex or degrevlex, in time proportional to $\min(s,t)n\max(\log(m/n),1) + \max(s,t)\log(n+m)$.

\subsection{Complexity model} \label{subsection:complexitymodel}
In computer algebra, it is often implicitly assumed that address and index arithmetics can be performed in constant time. 
This means that reading a byte from any position in the 
memory is done in constant time, and 
reading $k$ bytes is an $O(k)$-operation. 
Such a model has advantages over 
a Turing machine, since it is easier to work with and even 
more realistic in the cases when the memory on a modern computer is enough to 
handle the input data. We will use this model. 

However, when it comes to integer arithmetic (summation and comparisons), the papers
\cite{ Abbottetal, FGLMNC, FGLM, JustStigler07} implicitly assume this to be done in constant time. But with the model used above, the correct
complexities should read $O(\log(a) + \log(b)) = O(\log(\max(a,b)))$ for summation and $O(\log(\min(a,b))$ for comparison for the positive integers $a$ and $b$. 
For simplicity, we will assume that multiplication of the integers is $O(\log(a) \log(b))$, although there are better bounds \cite{furer}.

Inspired by \cite{MMM}, we will split the complexity studies in two parts. We will give arithmetic complexity for the arithmetic operations and time complexity 
for the monomial manipulations. We remark that the time complexity of performing $f$ arithmetic operations always is at least $O(f)$. 
We do not deal with growth of coefficients in the arithmetic operations, 
but refer the reader to \cite{FGLM}. In \cite{Abbottetal}, techniques are discussed when $\Bbbk = \mathbb{Q}$ using the 
Chinese remainder theorem. 

\subsection{Monomial orders} \label{subsection:monomialorders} 
An admissible monomial order $\prec$ on $x_1, \ldots, x_n$ is a total order on the monomials which respects multiplication and 
has the unit $1$ as the minimal element. 
A complete classification of admissible monomial orderings was first given in \cite{Robbiano85}. We will perform complexity analyses for 
a subclass of these orders, namely those defined by $n \times n$-matrices of rational numbers which we define below.
This is a proper restriction, since given an admissible monomial order and a natural number, there is an $n \times n$-matrix of rational numbers
that agrees with the given monomial order on all monomials whose degrees are
bounded by the given number \cite{kuhnlemayr}. In the rest of the paper we will assume that all monomial orders are admissible.  

Let $A = (a_{ij})$ be an element in $GL_n(\mathbb{Q})$ with the property that the first nonzero entry in each column is positive. 
Then we can induce an order $\prec_A$ on the monomials in $S$ by 
$$x^{\alpha} \prec_A x^{\beta} \text{ iff } A\alpha^t < A\beta^t,$$
where  $A\alpha^t < A\beta^t$ is the lexicographic order on $\mathbb{Z}^n$, that is, 
$(v_1, \ldots, v_n) < (w_1, \ldots, w_n)$ if $v_1 = w_1, \ldots, v_{i-1} = w_{i-1}$ and $v_i < w_i$ for some $i$.
Notice that multiplying a row of $A$ by a positive integer does not change the order induced by $A$, hence we may assume that $A$ is an integer matrix. 

Given $A$ and a monomial $x^{\alpha}$, we call the vector $A\alpha^t$ the associated order vector to $x^{\alpha}$ and we denote it by $\ov_A(x^\alpha)$. 
The simple observation
$$\ov_A(x_i x^{\alpha}) = A (\alpha_1, \ldots, \alpha_{i-1}, \alpha_i +1, \alpha_{i+1}, \ldots, \alpha_n)^t= \ov_A(x^\alpha) + (a_{1i}, \ldots, a_{ni}),$$
gives us a handy formula for computing the order vector recursively.

An important subclass of the orders defined above consists of 
(1) the lexicographical order, (2) the degree lexicographical order and (3) the degree reverse lexicographic order. 
These orders, called \emph{standard} in the rest of the paper, are the common most used ones in computer algebra and 
computer algebra systems have them predefined.

To clarify the notion of a matrix representing an order, we show below the matrix representations for the standard orders on 
$x_1, x_2, x_3$, all of which satisfy $x_1 \succ x_2 \succ x_3$. 
$$
A_{lex} = 
\begin{pmatrix}
1 & 0 & 0\\
0 & 1 & 0\\
0 & 0 & 1
\end{pmatrix}, 
A_{deglex} = 
\begin{pmatrix}
1 & 1 & 1 \\
1 & 0 & 0 \\
0 & 1 & 0 
\end{pmatrix},
A_{degrevlex} = 
\begin{pmatrix}
1 & 1 & 1 \\
0 & 0 & -1 \\
0 & -1 & 0 
\end{pmatrix}
$$
If $x_{1} \succ \cdots \succ x_{n}$ for a standard order $\prec$, we
 have $\ov_{lex}(x^\alpha) = \alpha$, $\ov_{deglex}(x^\alpha) = (\sum_i \alpha_i, \alpha_1, \ldots, \alpha_{n-1})$ and 
$\ov_{degrevlex} = (\sum_i \alpha_i, -\alpha_n, \ldots, -\alpha_2)$. 
It is easily seen how to compute $\ov(x_ix^{\alpha})$ given $\ov(x^{\alpha})$ for these orders.

In general, if $x_{i_1} \succ \cdots \succ x_{i_n}$ and $\prec$ is standard, we will assume that the sequence 
$i_1, \ldots, i_n$ is given a priori and that $x^\alpha$ means $x_{i_1}^{\alpha_1} \cdots x_{i_n}^{\alpha_n}$. 
We do not make this assumption when $\prec$ is given by a matrix.
As indicated in the introduction, we will see that 
the complexity analysis of the monomial comparisons is dependent on whether the order is standard or not.

When $\prec$ is a monomial order on $x_1, \ldots, x_n$
it will be useful to restrict $\prec$ to a subset of the variables. If $\Ess$ is such a subset, we write
$\prec_{\Ess}$ to denote the restriction of $\prec$ to $\Ess$.

\subsection{Comparing vectors of integers}
Since we assume that comparing two monomials is the same as lexicographically comparing their associated order vectors, we will
now focus on comparing vectors of integers. 

To be able to prove the next lemma, recall that the number of bits needed to represent an integer $a$ is 

$$
\numbits(a) =  \left\{ \begin{array}{ll}
2 & \text{ if } a = 0\\
\lfloor \log_2(|a|) \rfloor +2   & \text{ otherwise }
\end{array}
\right.,
$$
where one bit is used to represent sign. 
\begin{lemma} \label{lemma:bitsinvector}

Let $\alpha = (\alpha_1, \ldots, \alpha_n)$ be a vector of integers $\alpha_i$.
Let $m = \sum_i |\alpha_i|$. Then 
$$
\sum_i \numbits(\alpha_i) \leq  \left\{ \begin{array}{ll}
 3n  & \text{ if } m \leq 2n\\
n \log_2(m/n)  +2n    & \text{ otherwise }
\end{array}
\right.
$$
\end{lemma}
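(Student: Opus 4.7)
Split off the constant ``sign bit + zero bit'' cost $\numbits(0)=2$ from every coordinate. Since $\numbits(\alpha_i)=\lfloor\log_2|\alpha_i|\rfloor+2$ for $\alpha_i\ne 0$,
$$\sum_i\numbits(\alpha_i)=2n+S,\qquad S:=\sum_{i:\,\alpha_i\ne 0}\lfloor\log_2|\alpha_i|\rfloor,$$
so everything reduces to bounding $S$. The plan is to keep two separate upper bounds for $S$ and use whichever is sharper in each regime.

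The first is an integer-specific bound: $\lfloor\log_2 a\rfloor\le a/2$ for every positive integer $a$. Writing $b=\lfloor\log_2 a\rfloor$ gives $a\ge 2^b$, so the claim reduces to $b\le 2^{b-1}$, which is immediate for $b=0$ and follows by trivial induction for $b\ge 1$. Summing over the nonzero coordinates gives $S\le\sum|\alpha_i|/2=m/2$. This already settles the case $m\le 2n$, yielding $\sum_i\numbits(\alpha_i)\le 2n+m/2\le 3n$.

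The second is a continuous Jensen bound. If $k$ is the number of nonzero entries, concavity of $\log_2$ applied to the $|\alpha_i|$ gives $\sum_{\alpha_i\ne 0}\log_2|\alpha_i|\le k\log_2(m/k)$. The function $h(k)=k\log_2(m/k)$ is unimodal with peak at $k=m/e$, so the maximum over $1\le k\le n$ equals $h(n)=n\log_2(m/n)$ as soon as $n\le m/e$, i.e., $m\ge en$. For the remaining window $2n<m<en$ the first (integer) bound still suffices: with $x=m/n\in[2,e]\subset[2,4]$, the concave function $2\log_2 x-x$ vanishes at $x=2$ and $x=4$ and is therefore nonnegative on $[2,4]$, hence $m/2\le n\log_2(m/n)$. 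Either way, for all $m\ge 2n$ we obtain $S\le n\log_2(m/n)$, which gives the second inequality of the lemma.

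The main obstacle is spotting the integer inequality $\lfloor\log_2 a\rfloor\le a/2$: without it the naive continuous Jensen bound for $S$ attains its maximum $m\log_2(e)/e\approx 0.53\,m$ somewhere in the interior of $[1,n]$, which overshoots the target $3n$ by a small constant factor precisely in the critical range $m\approx 2n$.
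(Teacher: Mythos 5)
Your proof is correct, and it actually repairs a genuine slip in the paper's own argument. Both proofs open the same way: split each $\numbits(\alpha_i)$ into the constant $2$ plus $\lfloor\log_2|\alpha_i|\rfloor$ for nonzero entries, reducing the claim to bounding $S=\sum_{\alpha_i\neq 0}\lfloor\log_2|\alpha_i|\rfloor$, and both use AM--GM/concavity to pass to $k\log_2(m/k)$ where $k$ is the number of nonzero coordinates. The paper then claims that $f(k)=k\log_2(m/k)+2n$ satisfies $f'(k)=0$ at $k=m/2$, and uses this in both regimes: it asserts $f(k)\le f(m/2)=m/2+2n$ when $m\le 2n$, and asserts that $f$ is increasing on $[1,m/2]$ so that $f(k)\le f(n)$ when $m>2n$. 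But the actual critical point is $k=m/e$ (a factor of $\ln 2$ was dropped in differentiating $\log_2 k$), and indeed $f(m/e)=(m/e)\log_2 e+2n\approx 0.53m+2n>f(m/2)$. So the paper's continuous bound does not actually establish $\le 3n$ for $m\le 2n$, nor is $f$ monotone on $[1,n]$ throughout the window $2n<m<en$. You diagnose exactly this overshoot at the end of your write-up. Your fix is the integer inequality $\lfloor\log_2 a\rfloor\le a/2$ for positive integers $a$, giving the sharper $S\le m/2$; this cleanly disposes of $m\le 2n$ and, combined with the elementary observation that $x/2\le\log_2 x$ on $[2,4]$, also covers $2n<m<en$, while the Jensen bound alone handles $m\ge en$. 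The net effect is a correct proof of the same inequality, where the paper's proof as written has a calculus error whose conclusion happens to be the right constant; your version should replace it.
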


\begin{proof}
Suppose that $\alpha$ contains $k$ non-zero entries. Without loss of generality, we may assume that
$\alpha_i = 0$ if $i>k$ and $\alpha_i \neq 0$ if $i \leq k$. We get 

$$\sum_j  \numbits(\alpha_{j}) = \sum_{j=1}^k \numbits(\alpha_{j}) + \sum_{j=k+1}^n \numbits(0)$$

$$= \sum_{j=1}^k \lfloor \log_2(\alpha_{j}) \rfloor + 2k + 2(n-k) \leq \sum_{j=1}^k \log_2(\alpha_{j}) +2n.$$
Now $\sum_{j=1}^k \log_2(\alpha_{j}) = \log_2(\alpha_1 \cdots \alpha_k)$ and since
 $$\alpha_1 \cdots \alpha_k \leq \underbrace{m/k \cdots m/k}_{k \text{ times}},$$ 
we conclude that 
$$\sum_j \numbits(\alpha_j) \leq f(k),$$
where $$f(k) = k\log_2(m/k)+2n.$$
We see 
that $f'(k) = 0$ for $k=m/2$ and that $f(m/2)$ is a maximum. If $m \leq 2n$, then $\sum_i(\numbits(\alpha_i)) < f(m/2) = m/2 + 2n \leq 3n$. 
If $2n < m$, then observe that $f(k)$ is a monotone increasing function on the interval $[1,m/2]$ so that 
$\sum_i(\numbits(\alpha_i)) < f(n) = n\log_2(m/n) + 2n$. 
\end{proof}
We can formulate Lemma \ref{lemma:bitsinvector} in a more compact way by saying that the number of bits needed is proportional to $n \max(\log(m/n),1)$.  

\begin{lemma} \label{lemma:cmpvectors}
Let $v = (v_1, \ldots, v_n)$ and $w = (w_1, \ldots, w_n)$ be vectors of integers such that $\sum_i |v_i| =m_1$ and $\sum_i |w_i| =m_2$. Let $m = \max(m_1,m_2)$.
With time complexity  $O(n\max(\log(m/n),1))$ we can determine if $v$ and $w$ differ 
and if they do, we can determine the index $i$ where they differ.
\end{lemma}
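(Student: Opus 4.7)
The plan is to scan the coordinates one at a time, left to right, comparing $v_i$ with $w_i$ and halting as soon as a difference is detected (recording the index at which it happens). Since the statement asks only whether the vectors differ and, if so, for a differing index, at each coordinate we perform an equality test, whose cost is at worst that of a full integer comparison.

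In the complexity model of Subsection~\ref{subsection:complexitymodel}, comparing two integers $a$ and $b$ takes time $O(\log\min(|a|,|b|)+1)$, so the cost at coordinate $i$ is $O(\min(\numbits(v_i),\numbits(w_i)))$, where the $+1$ absorbs the sign bit and the edge case in which one of $v_i,w_i$ is zero. Summing over the $n$ coordinates, the total cost is
\begin{equation*}
O\!\left(\sum_{i=1}^n \min(\numbits(v_i),\numbits(w_i))\right) \;\le\; O\!\left(\min\!\left(\sum_{i=1}^n \numbits(v_i),\,\sum_{i=1}^n \numbits(w_i)\right)\right),
\end{equation*}
since $\min(a_i,b_i)\le a_i$ and $\le b_i$ for each $i$.

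Now I apply Lemma~\ref{lemma:bitsinvector} to each inner sum with parameter $m_1$ and $m_2$ respectively, obtaining the bound $O(n\max(\log(\min(m_1,m_2)/n),1))$. Because $\min(m_1,m_2)\le m$, this is dominated by $O(n\max(\log(m/n),1))$, which is the claimed complexity. I do not expect any real obstacle here; the argument is a direct combination of the per-coordinate integer-comparison cost with the bit-counting estimate of Lemma~\ref{lemma:bitsinvector}. The only thing to be careful about is that the cost is charged to the \emph{smaller} of $\numbits(v_i),\numbits(w_i)$ — otherwise one would be forced to use $m_{\max}=m$ directly inside Lemma~\ref{lemma:bitsinvector}, which would still give the same asymptotic bound but through a less tight route.
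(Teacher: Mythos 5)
Your proposal is correct and follows essentially the same approach as the paper: scan coordinates left to right, halt at the first difference, and bound the total comparison cost via Lemma~\ref{lemma:bitsinvector}. The only (harmless) difference is that you charge $\min(\numbits(v_i),\numbits(w_i))$ per coordinate where the paper more crudely charges $\numbits(v_i)+\numbits(w_i)$ and then applies the lemma to each sum separately; your version is marginally tighter but yields the same $O(n\max(\log(m/n),1))$ bound.
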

\begin{proof}

We proceed as follows. 
Compare $v_1$ and $w_1$. If they differ, we stop and return the index. 
Otherwise, continue until we reach an index $i$ such that $v_i \neq w_i$ or
$v = w$. Let us now analyze the complexity of this procedure.
To compare the vectors $v$ an $w$, we need to compare at most all entries, that is, we get time
complexity proportional to $\sum_i (\numbits(v_i) +\numbits(w_i))$. We rewrite this sum as
$\sum_i \numbits(v_i) + \sum_i \numbits(w_i)$ and  
use Lemma $\ref{lemma:bitsinvector}$ and the remark thereafter to conclude that this sum is dominated by 
$n (\max(\log(m_1/n),1) + \max(\log(m_2/n),1))$.

\end{proof}

\subsection{Comparing monomials with respect to a standard order}

\begin{lemma} \label{lemma:computeovstandard}
Suppose that $\prec$ is a standard order. An upper bound for computing $\ov(x_ix^{\alpha})$ is $O(n\log(m))$, where $m$ is the degree of 
$x^{\alpha}$. 
\end{lemma}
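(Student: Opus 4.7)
The plan is to go case by case through the three standard orders using the explicit formulas recalled in Section~\ref{subsection:monomialorders}. For $\mathrm{lex}$, $\ov(x_i x^\alpha)$ differs from $\ov(x^\alpha)$ only in the coordinate corresponding to $x_i$, which increases by $1$. For $\mathrm{deglex}$, exactly two coordinates are affected: the first (total degree) and the one carrying $\alpha_i$, each changing by $+1$. For $\mathrm{degrevlex}$ the situation is analogous: the total-degree coordinate increases by $1$ and, unless $i$ is the smallest index, one further coordinate changes by $-1$. In every case the number of coordinates that must be modified is bounded by a constant independent of $n$ and $m$, and each modification is an addition of $\pm 1$ to an integer whose absolute value is at most $m+1$.

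I would then assemble the cost under the complexity model of Section~\ref{subsection:complexitymodel}. Each $\pm 1$ update costs $O(\log(m))$, so the arithmetic part contributes only $O(\log(m))$ in total. To actually output the vector $\ov(x_i x^\alpha)$, the unchanged coordinates of $\ov(x^\alpha)$ must be copied; each of them has absolute value at most $m$ and therefore occupies $O(\log(m))$ bits, giving $O(n \log(m))$ for the copy. Summing these two contributions yields the announced upper bound $O(n \log(m))$.

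I do not foresee a real obstacle: the bound is dominated by the plain cost of writing down an $n$-vector whose entries are bounded by $m$ in absolute value. If one wished to sharpen it, Lemma~\ref{lemma:bitsinvector} could be invoked to replace the copy cost by $O(n \max(\log(m/n),1))$, since for each standard order the entries of $\ov(x_i x^\alpha)$ have absolute values summing to $O(m)$; however, the coarser formulation $O(n\log(m))$ is adequate for the lemma as stated and is what I would record.
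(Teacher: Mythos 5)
Your bound is correct, but you have quietly solved the \emph{next} lemma's problem rather than this one's. Read against Lemma~\ref{lemma:standardupdatingop}, whose hypothesis explicitly supplies $\ov(x^{\alpha})$ as input, Lemma~\ref{lemma:computeovstandard} is meant to bound the cost of producing the order vector \emph{from scratch}, i.e.\ from the exponent vector $\alpha$ and the index $i$. Your argument begins by assuming you already hold $\ov(x^{\alpha})$ in memory and only need to perform a $\pm 1$ update and a copy; that is precisely the setting of Lemma~\ref{lemma:standardupdatingop} (plus a copy), not of this lemma. The same parallel recurs for matrix orders with Lemmas~\ref{lemma:computeovmatrix} and~\ref{lemma:computeovgivenovmatrix}, which confirms the intended reading.

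The paper's own argument is different in content even though it lands on the same bound: for lex, the order vector \emph{is} the exponent vector, so the cost is $O(1)$; for deglex and degrevlex, the first coordinate of the order vector is the total degree, so one must sum all $n$ exponents, and that summation --- $n$ additions of integers bounded by $m$ --- is where the $O(n\log(m))$ comes from. Your $O(n\log(m))$ comes from writing out the $n$ coordinates of the output, a cost the paper effectively absorbs into $O(1)$ (for lex) or into the summation. The two accountings happen to give the same order of magnitude, so nothing is numerically wrong, but you should be explicit that you are working under the extra hypothesis that $\ov(x^{\alpha})$ is available; as written your proof does not cover the case where only the exponent data of $x^{\alpha}$ is given, and in that case (for degree orders) the unavoidable cost is the summation of the $\alpha_j$, which your proof never mentions. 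Your closing remark that the bound can be sharpened to $O(n\max(\log(m/n),1))$ via Lemma~\ref{lemma:bitsinvector} would likewise need a separate justification for the summation step in the from-scratch setting.
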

\begin{proof}

To compute $\ov(x^{\alpha})$ is $O(1)$ when $\prec$ is lex, while for a degree order, we need to sum all entries in order to compute $m$. Since all entries are 
bounded by $m$, the lemma follows. 

\end{proof}

\begin{lemma} \label{lemma:standardupdatingop}
Suppose that $\prec$ is a standard order. An upper bound for computing $\ov(x_ix^{\alpha})$ given $\ov(x^{\alpha})$ is $O(\log(m))$, where $m$ is the degree of 
$x^{\alpha}$.
\end{lemma}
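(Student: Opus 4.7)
The plan is to handle each of the three standard orders separately and observe that in every case multiplication by $x_i$ changes only $O(1)$ entries of the order vector, each by $\pm 1$, so that the cost reduces to performing a constant number of increments/decrements on integers bounded by $m+1$.

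First I would recall the explicit formulas displayed in Subsection~\ref{subsection:monomialorders}: for lex we have $\ov(x^\alpha)=\alpha$; for deglex, $\ov(x^\alpha)=(\sum_j\alpha_j,\alpha_1,\ldots,\alpha_{n-1})$; and for degrevlex, $\ov(x^\alpha)=(\sum_j\alpha_j,-\alpha_n,\ldots,-\alpha_2)$. From these one reads off that $\ov(x_ix^\alpha)$ differs from $\ov(x^\alpha)$ in at most two coordinates: in the lex case only the $i$-th coordinate is incremented; in the deglex case the first coordinate and (if $i<n$) the $(i+1)$-st coordinate are incremented; in the degrevlex case the first coordinate is incremented and (if $i\geq 2$) the coordinate corresponding to $-\alpha_i$ is decremented.

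Next I would invoke the complexity model from Subsection~\ref{subsection:complexitymodel}: adding or subtracting $1$ from an integer $a$ takes time $O(\log(|a|+1))$. Since the entries of $\ov(x^\alpha)$ have absolute value at most $m$ (the degree of $x^\alpha$), each of the at most two updates costs $O(\log(m))$. Summing over the constant number of updates yields the claimed bound $O(\log(m))$.

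There is no real obstacle here; the only thing to be careful about is that the formulas cover exactly the three standard orders and that, in the degrevlex case, the change of sign does not increase the bit-length beyond $\log(m)$, which is transparent from the $\numbits$ definition preceding Lemma~\ref{lemma:bitsinvector}. The contrast with Lemma~\ref{lemma:computeovstandard} is worth emphasizing: computing $\ov$ from scratch for a degree order requires summing $n$ entries, whereas updating an already-computed $\ov$ only touches a bounded number of coordinates, which is exactly what makes the logarithmic bound possible.
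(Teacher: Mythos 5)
Your proof is correct and takes essentially the same approach as the paper's: the paper's proof simply observes that at most two entries need to be incremented (for a degree order) and that all entries are bounded by $m$, which is exactly the core of your case-by-case argument.
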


\begin{proof}
We need to increment at most two entries (in case of a degree order). The lemma follows since all entries are bounded by $m$. 
\end{proof}

\begin{lemma} \label{lemma:cmpmonomialsstandardgivenop}
An upper bound for the time needed to compare two monomials $x^{\alpha}$ and $x^{\beta}$ given 
$\ov_{\prec}(x^{\alpha})$ and
$\ov_{\prec}(x^{\beta})$
with respect to a standard order is $O(n\max(\log(m/n),1))$, where
$m = \max(\sum_i \alpha_i, \sum_i \beta_i)$.
\end{lemma}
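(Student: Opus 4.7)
The plan is to reduce the comparison of two monomials to the lexicographic comparison of their associated order vectors, and then apply Lemma \ref{lemma:cmpvectors} directly.

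First I would observe that, by the definition of $\prec_A$ given earlier, comparing $x^\alpha$ and $x^\beta$ with respect to $\prec$ amounts to comparing $\ov_\prec(x^\alpha)$ and $\ov_\prec(x^\beta)$ lexicographically. So once the order vectors are given, no further algebraic work on the exponents is needed; the problem is really a vector comparison problem.

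Next I would verify that the hypotheses of Lemma \ref{lemma:cmpvectors} are met with the correct bound on the $L^1$-norms of the order vectors. Using the explicit formulas given in Section \ref{subsection:monomialorders}, for lex we have $\ov_{lex}(x^\alpha)=\alpha$, so $\sum_i |\alpha_i|$ equals the degree; for deglex and degrevlex the order vector consists of $\sum_i \alpha_i$ followed by $n-1$ of the (possibly negated) exponents, so the sum of absolute values is at most $2\sum_i \alpha_i$. In all three cases, if $\sum_i \alpha_i \le m$ then the $L^1$-norm of $\ov_\prec(x^\alpha)$ is at most $2m$, and similarly for $\beta$. Each order vector has exactly $n$ entries.

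The main (and really the only) step is then to apply Lemma \ref{lemma:cmpvectors} to $v = \ov_\prec(x^\alpha)$ and $w = \ov_\prec(x^\beta)$ with the bound $2m$ in place of $m$. This gives time complexity $O(n \max(\log(2m/n),1)) = O(n \max(\log(m/n),1))$, which is the claimed bound. I do not expect any real obstacle: the entire content of the lemma is that passing from monomials to their order vectors preserves both the dimension $n$ and the degree bound (up to a constant factor), so the vector comparison result transfers immediately.
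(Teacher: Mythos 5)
Your proof is correct and follows essentially the same route as the paper's: reduce to lexicographic comparison of the order vectors, note that for each standard order the $L^1$-norm of the order vector is bounded by a constant multiple of the degree (the paper phrases it as ``the first entry equals the degree and the sum of the rest of the entries is bounded by $m$''), and apply Lemma \ref{lemma:cmpvectors}. The only difference is cosmetic bookkeeping of the $2m$ bound.
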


\begin{proof}
Immediately from Lemma \ref{lemma:cmpvectors} when $\prec$ is lex. When $\prec$ is one of the degree orders, the first entry equals the degree and the sum of the 
rest of the entries is bounded by $m$, thus we get the same complexity in this case as well.
\end{proof}

\begin{lemma} \label{lemma:restrictstandard}
Let $\prec$ be a standard monomial order and let $\Ess = \{x_{j_1}, \ldots, x_{j_{\overline{n}}} \}$ 
be a subset of the variables such that $x_{j_1} \succ \cdots \succ x_{j_{\overline{n}}}$. 
To compute $\prec_{\Ess}$ is $O(1)$. 
\end{lemma}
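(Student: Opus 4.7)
My plan is to argue that the restriction of a standard order to a subset of variables is itself a standard order of the same type on that subset, so that once the variable ordering on $\Ess$ is available, no computation is needed to describe $\prec_{\Ess}$.

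First I would verify the structural fact that if $\prec$ is lex (resp.\ deglex, degrevlex) on $x_1,\ldots,x_n$ with $x_{i_1} \succ \cdots \succ x_{i_n}$, then for monomials $x^\alpha, x^\beta$ supported only on the variables in $\Ess$, the comparison $x^\alpha \prec x^\beta$ coincides with the lex (resp.\ deglex, degrevlex) comparison on $\Ess$ using the induced ordering $x_{j_1} \succ \cdots \succ x_{j_{\overline{n}}}$. This is immediate from the definitions: zero components in $\alpha$ and $\beta$ at indices outside $\Ess$ contribute nothing to degrees and break no lex ties.

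Next I would invoke the convention established earlier in the paper, namely that for a standard order we assume the variable sequence $i_1, \ldots, i_n$ is given a priori and that monomials are represented relative to this sequence. Since $\Ess$ is specified as the list $x_{j_1}, \ldots, x_{j_{\overline{n}}}$ in descending order, the data needed to specify $\prec_{\Ess}$ is simply a pointer to the type (lex, deglex, or degrevlex) together with this already-available sequence. No comparisons, summations, or reorderings are performed, so the cost is $O(1)$.

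There is no real obstacle here beyond being careful about what the representation of an order is in the computational model: the lemma is essentially a bookkeeping statement that standard orders are closed under restriction to a subset of variables and that the restricted order inherits its representation for free.
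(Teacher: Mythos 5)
Your argument is correct and matches what the paper intends: the paper's own proof is simply ``Immediate,'' and your proposal supplies the underlying reasoning, namely that lex, deglex, and degrevlex are closed under restriction to a subset of variables and that the descending variable sequence on $\Ess$ is already given, so describing $\prec_{\Ess}$ requires only fixing the order type and reusing that sequence. This is the same (one-line) observation the paper relies on, just spelled out.
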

\begin{proof}
Immediate.
\end{proof}

\subsection{Comparing monomials with respect to a matrix order}
\begin{lemma} \label{lemma:computeovmatrix}
An upper bound for the time needed for determining $\ov_A(x^\alpha)$ by computing $A\alpha^t$ 
is proportional to $n^2\max(\log(m/n),1) \log(c)$, where $c = \max(|a_{ij}|)$ and
$m = \sum_i \alpha_i$.
\end{lemma}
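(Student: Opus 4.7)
The plan is to compute $A\alpha^t$ row by row and, for each row, to separate the cost into multiplications and additions. The total bill will then be obtained by multiplying the per-row cost by $n$, so the heart of the argument is to show that one row contributes $O(n \log(c)\max(\log(m/n),1))$.

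Fix a row index $i$ and consider the inner product $(A\alpha^t)_i = \sum_{j=1}^n a_{ij}\alpha_j$. First I would bound the multiplication cost. Each product $a_{ij}\alpha_j$ costs $O(\log|a_{ij}|\cdot\log\alpha_j)\le O(\log(c)\log\alpha_j)$ under the complexity model of Section 2.1. Summing over $j$ gives $O(\log(c))\cdot\sum_{j=1}^n \log\alpha_j$, and here comes the key input: since $\sum_j \alpha_j = m$ with $\alpha_j\ge 0$, Lemma \ref{lemma:bitsinvector} applied to $\alpha$ yields $\sum_j \numbits(\alpha_j) = O(n\max(\log(m/n),1))$, and therefore the per-row multiplication cost is $O(n\log(c)\max(\log(m/n),1))$. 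This use of Lemma \ref{lemma:bitsinvector} is what prevents the naive estimate of $n\log(m)$ per row.

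Next I would handle the additions. Accumulate the $n$ products in order: each partial sum, as well as the final value, is bounded in absolute value by $c m$, so every one of the $n-1$ additions costs $O(\log(cm)) = O(\log c + \log m)$. Provided $c\ge 2$ and $m\ge n$ (the only regime where the bound $n^2\max(\log(m/n),1)\log(c)$ is the binding constraint), this total $O(n\log(cm))$ is absorbed into the multiplication cost; the degenerate cases ($c=1$, $\alpha=0$, or $m<n$) can be checked directly and do not exceed the stated bound. Summing the per-row cost over the $n$ rows gives the asserted total of $O(n^2\max(\log(m/n),1)\log(c))$.

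The main obstacle is the second paragraph: one must resist the temptation to bound each $\log\alpha_j$ by $\log m$, which would only give $O(n^2\log(c)\log(m))$ in the end. Invoking Lemma \ref{lemma:bitsinvector} aggregately on the whole vector $\alpha$, rather than entrywise, is exactly the trick needed to replace the $\log m$ factor by the sharper $\max(\log(m/n),1)$, and this is the one step that deserves care.
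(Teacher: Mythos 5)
Your proof follows the same route as the paper: you compute $A\alpha^t$ row by row, bound the per-row multiplication cost by $\log(c)\sum_j\log\alpha_j$ and then invoke (the argument of) Lemma~\ref{lemma:bitsinvector} to replace $\sum_j\log\alpha_j$ by $O(n\max(\log(m/n),1))$, and sum over the $n$ rows. The only difference is that you spell out why the addition cost is absorbed, whereas the paper simply calls it negligible; this is a welcome bit of extra care but not a different method.
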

\begin{proof}
To determine the $i$'th index of $\ov_A(x^{\alpha})$, one needs to compute
$a_{i1} \alpha_1 + \cdots + a_{in} \alpha_n$. The cost for the multiplication is proportional to
$\sum_i \log_2(c) \log_2(\alpha_i)$ $= \log_2(c)\sum_i \log_2(\alpha_i)$. By an  argument similar 
to the proof of Lemma \ref{lemma:bitsinvector} it follows that this expression is dominated by an expression 
proportional to $n\max(\log(m/n),1) \log(c)$. 
The cost for the addition is negligible and since $i$ runs from $1$ to $n$, the lemma follows.
\end{proof}
\begin{lemma} \label{lemma:computeovgivenovmatrix}
An upper bound for the time needed for computing $\ov_A(x_i x^{\alpha})$ given $\ov_A(x^{\alpha})$ is proportional to
$n\log(cm)$, where $c = \max(|a_{ij}|)$ and $m = \sum_i \alpha_i$.
\end{lemma}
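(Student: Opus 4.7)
The plan is to exploit the recursive formula
$$\ov_A(x_i x^{\alpha}) = \ov_A(x^\alpha) + (a_{1i}, \ldots, a_{ni})^t$$
already recorded in the paper, so that computing $\ov_A(x_ix^\alpha)$ from $\ov_A(x^\alpha)$ reduces to adding the $i$-th column of $A$ to a vector of $n$ integers. This is exactly $n$ integer additions, so everything boils down to bounding the bit-sizes of the integers being added and invoking the complexity model for integer summation from Section \ref{subsection:complexitymodel}.

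First I would bound the entries of $\ov_A(x^\alpha)$. Since $\alpha$ is a monomial exponent, $\alpha_j\ge 0$ and $\sum_j \alpha_j=m$, so the $k$-th entry satisfies
$$\left|\sum_{j=1}^n a_{kj}\alpha_j\right| \le \sum_{j=1}^n |a_{kj}|\,\alpha_j \le c\sum_j \alpha_j = cm.$$
The entries of the added column are bounded by $c$ in absolute value. Then, by the cost model, each of the $n$ additions takes time $O(\log(cm) + \log(c)) = O(\log(cm))$, and summing over the $n$ coordinates gives the desired $O(n\log(cm))$ bound.

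The only mild subtlety is verifying that the resulting bound $cm$ is still valid for the output vector $\ov_A(x_ix^\alpha)$ if this lemma is meant to be iterated (i.e.\ used repeatedly in a BM-style computation); but since $x_ix^\alpha$ has total degree $m+1$, the same estimate gives bound $c(m+1)$, which is absorbed into the same big-$O$ expression. I don't expect a real obstacle here — the lemma is essentially a direct application of the updating formula together with the bit-length bound for entries of $A\alpha^t$, and so the proof is short.
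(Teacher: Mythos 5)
Your proposal is correct and matches the paper's argument: both use the recursion $\ov_A(x_i x^{\alpha}) = \ov_A(x^\alpha) + (a_{1i}, \ldots, a_{ni})$ and observe that this amounts to $n$ additions of integers bounded by $cm$, giving the $O(n\log(cm))$ bound. You simply spell out the bound on the entries of $\ov_A(x^\alpha)$ a bit more explicitly than the paper does.
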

\begin{proof}
From the recursion formulas given in section \ref{subsection:monomialorders}, we  see that
$\ov_A(x^{\alpha} \cdot x_i) =  \ov_A(x^\alpha) + (a_{1i}, \ldots, a_{ni})$, that is, we need to do $n$ summations of integers bounded by $cm$.
\end{proof}

\begin{lemma} \label{lemma:cmpgivenovmatrix}
An upper bound for the time needed to compare two monomials $x^{\alpha}$ and $x^{\beta}$ with respect to a matrix order defined by $A$ 
given $\ov_A(x^{\alpha})$ and $\ov_A(x^{\beta})$ is proportional to $n\log(cm)$, where $c = \max(|a_{ij}|)$ and $m = \sum_i \alpha_i$.
\end{lemma}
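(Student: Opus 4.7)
The plan is to mimic the proof of Lemma \ref{lemma:cmpmonomialsstandardgivenop}, but using the cruder per-entry bound that is available in the matrix setting. First I would observe that each coordinate of the order vector has the shape
$$(\ov_A(x^\alpha))_i \;=\; a_{i1}\alpha_1 + \cdots + a_{in}\alpha_n,$$
so that $|(\ov_A(x^\alpha))_i| \le c\sum_j \alpha_j = cm$, since $\alpha_j \ge 0$ for a monomial. The same bound holds for the entries of $\ov_A(x^\beta)$ after replacing $m$ by $\max(\sum_j\alpha_j,\sum_j\beta_j)$, which up to constants is the same. Consequently every entry that appears has bit length $O(\log(cm))$.

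Next I would carry out the lexicographic comparison entry by entry: scan the coordinates from left to right, and at position $i$ compare $(\ov_A(x^\alpha))_i$ with $(\ov_A(x^\beta))_i$; stop at the first index where they differ, or report equality if no such index exists. By the bit-length bound above and our complexity model for integer comparison, each individual coordinate comparison costs $O(\log(cm))$. Since we perform at most $n$ such comparisons, the total cost is $O(n\log(cm))$, which is what the lemma claims.

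The main point to notice, and the only place this proof deviates from the standard-order analogue, is that we cannot exploit the \emph{concentration} argument used in Lemma \ref{lemma:bitsinvector}: in the standard-order case the entries of the order vector are either the $\alpha_i$ themselves or one extra degree entry, and hence their absolute values sum to at most $2m$, which sharpened the bound to $O(n\max(\log(m/n),1))$. In the general matrix case an entry $a_{i1}\alpha_1+\cdots+a_{in}\alpha_n$ can individually reach magnitude $cm$, and no better global budget on the sum of the $|(\ov_A(x^\alpha))_i|$ is available without extra hypotheses on $A$. So the only obstacle is resisting the temptation to apply Lemma \ref{lemma:bitsinvector} here; once one uses the uniform per-coordinate bound, the argument is immediate.
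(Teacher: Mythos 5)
Your proof is correct and matches the paper's one-line argument: the paper simply observes that there are $n$ comparisons of integers bounded by $cm$. Your elaboration of the per-entry bound $|(\ov_A(x^\alpha))_i|\le cm$ and the closing remark about why the concentration trick of Lemma \ref{lemma:bitsinvector} is unavailable here are accurate and in the spirit of the paper, but substantively you use the same approach.
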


\begin{proof}
We have $n$ comparisons of integers bounded by $cm$.
\end{proof}
Let $\prec$ be a monomial order given by a matrix and let $\Varord_{\prec}(n)$ be the cost of determining $i_1, \ldots, i_n$ such that
$x_{i_1} \succ \cdots \succ x_{i_n}$. When 
$\prec$ is a standard order we assume that $i_1, \ldots, i_n$ was given as input, so that $\Varord_{\prec}(n)$ is $O(1)$. However, 
we do not assume this for a general order given by a matrix. Instead we have

\begin{lemma} \label{lemma:varordmatrix}
For a general ordering given by a matrix, we can compute $\Varord_{\prec}(n)$ in time $O(n^2 log(c)\log(n))$.
\end{lemma}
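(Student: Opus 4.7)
The plan is to reduce the computation of $\Varord_\prec(n)$ to a sorting problem on the columns of $A$ and then invoke Lemma \ref{lemma:cmpvectors} to bound the cost of each comparison.

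First I would observe that for each variable $x_i$, the associated order vector $\ov_A(x_i) = A e_i^t$ is exactly the $i$-th column of $A$, so no arithmetic is required to obtain the $n$ vectors to be compared; they are available as subarrays of the input. Determining the permutation $i_1, \ldots, i_n$ with $x_{i_1} \succ \cdots \succ x_{i_n}$ is then equivalent to sorting the columns of $A$ lexicographically, which can be done with $O(n \log n)$ pairwise comparisons using, say, merge sort or heap sort.

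Next I would bound the cost of one such comparison. Each column has $n$ entries, each of absolute value at most $c$, so the sum of absolute values of its entries is at most $nc$. Applying Lemma \ref{lemma:cmpvectors} with $m \le nc$ gives a per-comparison cost of
\[
O\bigl(n \max(\log(nc/n),1)\bigr) = O(n \max(\log c,1)) = O(n \log c).
\]
Multiplying the number of comparisons by the per-comparison cost yields the desired bound $O(n^2 \log(c)\log(n))$.

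The only subtlety I anticipate is making sure the invocation of Lemma \ref{lemma:cmpvectors} goes through with the right parameter: the lemma is stated in terms of the $\ell^1$-norm of the vectors being compared, which for columns of $A$ is bounded by $nc$ rather than by $c$, so $\log(m/n)$ collapses to $\log c$ — which is exactly what is needed. Everything else is standard: reading a column of $A$ is free in our model, and the sorting overhead beyond the comparisons themselves is absorbed into the $O(n \log n)$ factor.
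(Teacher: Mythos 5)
Your proof is correct and follows essentially the same route as the paper's: reduce to lexicographic sorting of the columns of $A$, bound each column comparison by $O(n\log c)$ via Lemma~\ref{lemma:cmpvectors} with $\ell^1$-norm at most $nc$, and multiply by the $O(n\log n)$ comparisons needed to sort. The only cosmetic difference is that you spell out the invocation of Lemma~\ref{lemma:cmpvectors} and the parameter $m\le nc$ explicitly, whereas the paper writes the bound $O(n\log(nc/n)+n)$ directly.
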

\begin{proof}
To compare $x_i \prec x_j$ is the same as comparing the $i$'th and the $j$'th column of the 
matrix $A$ defining $\prec$. An upper bound for the comparisons is thus 
$O(n \log(nc/n) + n) = O(n\log(c))$. Since sorting is $O(n\log(n))$ comparisons, the upper bound becomes  
$O(n^2 \log(c)\log(n))$.
\end{proof}

\begin{lemma} \label{lemma:restrictmatrix}
Let $\prec$ be a monomial order given by a matrix $A$ and let $\Ess = \{x_{j_1}, \ldots, x_{j_{\overline{n}}} \}$ 
be a subset of the variables such that $x_{j_1} \succ \cdots \succ x_{j_{\overline{n}}}$.  To determine an $\overline{n} \times \overline{n}$-matrix 
$A_{\Ess}$ such that $\prec_{\Ess}$ is given by $A_{\Ess}$ can be done 
using $O(n\overline{n}^2)$ arithmetic operations over $\mathbb{Q}$.
\end{lemma}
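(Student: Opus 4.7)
The plan is to form the $n \times \overline{n}$ matrix $B$ obtained by selecting from $A$ the columns indexed $j_1, \ldots, j_{\overline{n}}$, in that order, and then to take for $A_{\Ess}$ an $\overline{n} \times \overline{n}$ submatrix consisting of a well-chosen set of rows of $B$, kept in their original order and never modified. The key observation is that for a monomial $x^{\alpha}$ supported on $\Ess$, identifying $\alpha$ with its $\overline{n}$-vector of exponents on $x_{j_1}, \ldots, x_{j_{\overline{n}}}$, one has $\ov_A(x^{\alpha}) = B\alpha^t$. So it suffices to pick $\overline{n}$ rows of $B$ whose lex-comparison on $\alpha^t$ reproduces the lex-comparison of the full vectors $B\alpha^t$ and $B\beta^t$ on every pair.

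The selection proceeds by a Gaussian-elimination-like pass through the rows of $B$ from top to bottom. I maintain a list of selected rows together with their row-echelon reductions (used only internally as pivot information). For each new row $r_i$, I reduce it modulo the current pivots; if the residue is zero, $r_i$ is a $\mathbb{Q}$-linear combination of previously selected rows and is discarded, otherwise the \emph{original} $r_i$ is appended to $A_{\Ess}$ and its reduced form is installed as a new pivot. Since $A \in GL_n(\mathbb{Q})$, the columns of $B$ are linearly independent, hence $B$ has rank $\overline{n}$, so exactly $\overline{n}$ rows are kept and $A_{\Ess}$ has the required size.

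Three things need to be verified. First, that $A_{\Ess}$ defines $\prec_{\Ess}$: if $B\alpha^t$ and $B\beta^t$ first differ at row $r_i$, then $r_i$ cannot be a linear combination of the previously selected rows (each of which evaluates equally on $\alpha$ and $\beta$), so $r_i$ is selected, and every selected row above it also evaluates equally on $\alpha$ and $\beta$; consequently $A_{\Ess}\alpha^t$ and $A_{\Ess}\beta^t$ first differ at the entry coming from $r_i$, with the same sign. Second, that the first non-zero entry in each column of $A_{\Ess}$ is positive: if column $j$ of $B$ first becomes non-zero at row $t$, then $t$ cannot have been skipped, since a skipped row is a combination of earlier selected rows, each vanishing in column $j$, which would force $B_{t,j}=0$; hence $t$ is selected and all selected rows above it contribute zero to column $j$. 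Third, the complexity: there are $n$ rows to process, each needing at most $\overline{n}$ pivot reductions of cost $O(\overline{n})$ arithmetic operations over $\mathbb{Q}$, giving the stated bound $O(n\overline{n}^2)$.

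The subtle point, and where I would be most careful, is that one must return the \emph{original} rows of $B$ rather than their row-reduced counterparts: elementary row operations do not in general preserve lex-order (a two-line example like $(1,1),(1,0)$ versus its reduction $(1,1),(0,1)$ already flips the order on $(1,0)$ and $(0,1)$), so the reduced rows are used only as a dependence detector and never as rows of $A_{\Ess}$.
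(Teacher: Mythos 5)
Your proof is correct and follows the same strategy as the paper: restrict $A$ to the columns indexed by $\Ess$, observe that any row which is a $\mathbb{Q}$-linear combination of the rows above it is superfluous for lexicographic comparison, discard those, and note the cost is that of a row reduction, $O(n\overline{n}^2)$. Your final paragraph actually sharpens a point the paper leaves implicit (and whose wording "to determine $A_{\Ess}$ is the same as rowreducing $\overline{A}$" could be misread): $A_{\Ess}$ must consist of the \emph{original} retained rows of $\overline{A}$, not their echelon reductions, since row operations do not preserve the induced lexicographic order; and your verification that the column-sign condition survives the selection is a detail the paper omits but which is needed for $A_{\Ess}$ to define an admissible order in the stated framework.
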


\begin{proof}
Clearly the $n \times \overline{n}$ matrix $\overline{A}$ obtained by keeping the columns 
$j_1, \ldots, j_{\overline{n}}$ defines $\prec_{\Ess}$ and it has rank $\overline{n}$. 
Suppose that the $i$'th row of $\overline{A}$ can be written as a linear combination of the rows whose indices are less then $i$. 
Let $\alpha$ and $\beta$ be two order vectors with respect to $\Ess$. Then, if
$\overline{A}\alpha$ and $\overline{A}\beta$ agree on the first $i-1$ rows, then they also agree on the $i$'th row. Hence the $i'$th row is superfluous. Thus, to determine
$A_{\Ess}$ is the same as rowreducing $\overline{A}$, which has arithmetic complexity $O(n\overline{n}^2)$. 
\end{proof}

\subsection{Merging sorted lists of monomials}

Recall that the merging of two sorted lists $a$ and $b$ is done using $O(\max(t,s))$ comparisons, where $t$ is the number of elements in $a$ and $s$ 
is the number of elements in $b$.
When $a$ and $b$ are lists of $n$-tuples of elements in some set and $a$ and $b$ are sorted lexicographically in increasing order, then we can improve the classical 
merge algorithm. We will use this result to analyze the cost of the monomial manipulations in the BM-algorithm.

Let $v = (v_1, \ldots, v_n)$ and $w = (w_1, \ldots, w_n)$ be two $n$-vectors of non-negative integers. If $v\neq w$, 
let $\Delta(v,w)$ be the first index where $v$ and $w$ differ. If $v = w$ let $\Delta(v,w) = n+1$.

\begin{lemma} \label{lemma:lexusingdiff}
If $u<v$, $u<w$ and $\Delta(u,w) < \Delta(u,v)$, then $u< v < w$ and 
$\Delta(v,w) = \Delta(u,w)$.
\end{lemma}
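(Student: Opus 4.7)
The plan is to unpack the definitions of $\Delta$ and of the lexicographic order coordinate by coordinate. Set $i = \Delta(u,w)$ and $j = \Delta(u,v)$; the hypothesis gives $i < j$, and since $u < v$ forces $u \neq v$ we have $j \leq n$, so $1 \leq i < j \leq n$ and in particular $i$ is a legitimate coordinate index.

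Next I would read off what each of the three hypotheses says at the level of individual coordinates. From $\Delta(u,w) = i$ together with $u < w$, we get $u_k = w_k$ for every $k < i$ and $u_i < w_i$. From $\Delta(u,v) = j > i$, we get $u_k = v_k$ for every $k \leq i$ (indeed for every $k < j$).

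Substituting the second set of equalities into the first then yields $v_k = u_k = w_k$ for all $k < i$ and $v_i = u_i < w_i$. This is exactly the statement that $v$ and $w$ first disagree at position $i$ and that at that position $v_i < w_i$, so $\Delta(v,w) = i = \Delta(u,w)$ and $v < w$ in the lexicographic order. Combining with the hypothesis $u < v$ gives the claimed chain $u < v < w$.

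There is no substantial obstacle; the argument is a direct bookkeeping with indices. The only point worth a moment of attention is that the conclusion genuinely uses $u < v$ (not merely $u \leq v$ or $\Delta(u,v)$ being well defined), since this is what guarantees $j \leq n$ and hence that the index $i$ obtained from $i < j$ actually lies in $\{1,\dots,n\}$ where the comparison takes place.
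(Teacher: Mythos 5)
Your proof is correct and takes essentially the same route as the paper: read off the coordinate equalities and the one strict inequality implied by $\Delta(u,w)=i$ and $u<w$, then use $i<\Delta(u,v)$ to replace the $u$-coordinates by $v$-coordinates up through index $i$, and conclude $v<w$ with $\Delta(v,w)=i$. One small quibble with your closing remark: that $i$ is a genuine index already follows from $u<w$ (which forces $u\neq w$, hence $\Delta(u,w)\leq n$), not from $u<v$; the hypothesis $u<v$ is only needed to upgrade the conclusion from $u\leq v<w$ to the strict chain $u<v<w$.
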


\begin{proof}
Let $k = \Delta(u,w)$. Then $u_1 = w_1, \ldots, u_{k-1} = w_{k-1}$ and 
$u_k < w_k$. Since $k < \Delta(u,v)$, we have $u_1 = v_1, \ldots, u_k = v_k$. Hence
$v_1 = w_1, \ldots, v_{k-1} = w_{k-1}$ and $v_k < w_k$ and thus $v < w$ and 
$\Delta(v,w) = k = \Delta(u,w)$.
\end{proof}

\begin{lemma} \label{lemma:minusingdiff}
If $u<v$, $u<w$ then $\Delta(v,w) \geq \min(\Delta(u,v),\Delta(u,w))$.
\end{lemma}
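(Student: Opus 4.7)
The plan is to unwind the definition of $\Delta$ directly, with no case analysis beyond the trivial. Set $k = \min(\Delta(u,v), \Delta(u,w))$. Since $u<v$ and $u<w$ both hold strictly, we have $u\neq v$ and $u\neq w$, so $\Delta(u,v)\leq n$ and $\Delta(u,w)\leq n$; in particular $k$ is a well-defined positive integer with $1 \leq k \leq n$.

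Next I would observe that $\Delta(u,v) \geq k$ means, by the definition of $\Delta$, that $u_i = v_i$ for every $i < k$. Symmetrically, $\Delta(u,w) \geq k$ gives $u_i = w_i$ for every $i<k$. Combining these two chains of equalities yields $v_i = w_i$ for every $i<k$, which is exactly the statement that $\Delta(v,w) \geq k$ (whether $v = w$ on the remaining coordinates or not, since in the equality case $\Delta(v,w) = n+1 \geq k$). That is the desired inequality.

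There is essentially no obstacle here; the whole statement reduces to the transitivity of equality applied coordinatewise on the common prefix determined by $k$. The only point worth a sentence in the write-up is to note why $k$ is not $n+1$ (so that ``indices $i < k$'' is a meaningful range, though the argument goes through uniformly in any case) and to handle the possibility that $v$ and $w$ could actually be equal, in which case $\Delta(v,w) = n+1$ trivially dominates $k$.
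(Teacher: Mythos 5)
Your proof is correct and takes a cleaner route than the paper. The paper proves this lemma by a case split on whether $\Delta(u,v)$ and $\Delta(u,w)$ coincide: when they are unequal it invokes Lemma~\ref{lemma:lexusingdiff} (which additionally tracks the order relation between $v$ and $w$), and when they are equal it argues directly that $v$ and $w$ share a prefix. You avoid the case split entirely, observing that $u_i = v_i$ and $u_i = w_i$ for all $i < k = \min(\Delta(u,v),\Delta(u,w))$, whence $v_i = w_i$ for $i<k$ by transitivity of equality, and this is exactly $\Delta(v,w)\geq k$ (including the degenerate case $v=w$). Your argument is self-contained and more elementary since it never uses the hypotheses $u<v$, $u<w$ beyond ensuring $k\leq n$, nor does it need the stronger conclusion of Lemma~\ref{lemma:lexusingdiff} about the sign of the comparison; the paper's route has the small advantage of reusing a lemma it has already established, but at the cost of case analysis and a slightly heavier dependency graph.
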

\begin{proof}
If $\Delta(u,w) < \Delta(u,v)$ or $\Delta(u,v) < \Delta(u,w)$, then the lemma follows by 
Lemma \ref{lemma:lexusingdiff}. Otherwise, $\Delta(u,v) = \Delta(u,w)$ implies that
$v$ and $w$ agree on the first $\Delta(u,v)$ positions.
\end{proof}

\begin{lemma} \label{lemma:merge}
Let $a = (a_1, \ldots, a_t)$ be a list of $n$-tuples of elements in an ordered set $\Sigma$. Suppose that $a$ is sorted lexicographically in increasing order and 
that we are given $\Delta(a_i,a_{i+1})$ for $i = 1, \ldots, t-1$. Let $b$ be any element  in $\Sigma^n$. 
Using $O(t+n)$ comparisons of elements in $\Sigma$ plus time proportional to $t\log(n)$, we may find an index $i$, $0 \leq i \leq t$, such that
$a_1 \leq \cdots \leq a_{i} < b \leq a_{i+1} \leq \cdots \leq a_t$ and $\Delta(a_i,b) $when $i \geq 1$ and $\Delta(b,a_{i+1})$ when $i < t$. 

When $\Sigma$ is the set of non-negative integers and $\sum_i v_i\leq m$ for all $v \in a$ and $\sum_i b_i \leq m$, an upper bound for the time complexity 
is $O(n \max(\log(m/n),1) + t \log(\max(n,m)))$.  
\end{lemma}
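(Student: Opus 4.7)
The plan is to describe a scan-based algorithm that walks through $a$ once, using the precomputed $\Delta(a_i, a_{i+1})$ values to avoid redundant $\Sigma$-comparisons whenever possible. First, I would compare $a_1$ against $b$ position by position to determine $d_1 := \Delta(a_1,b)$ together with the relation between $a_1$ and $b$; if $b \leq a_1$ we return $i=0$, otherwise $a_1 < b$ and we start iterating. Inductively, assuming $a_i < b$ and $d_i = \Delta(a_i,b)$ are known, I would compare the integer $\Delta(a_i,a_{i+1})$ with $d_i$ and branch into three cases according to Lemma~\ref{lemma:lexusingdiff}. If $\Delta(a_i,a_{i+1}) < d_i$, then $a_i < a_{i+1} < b$ and $\Delta(a_{i+1},b) = d_i$, so we advance without any $\Sigma$-comparison; if $\Delta(a_i,a_{i+1}) > d_i$, then $a_i < b < a_{i+1}$ and $\Delta(b,a_{i+1}) = d_i$, so we stop; if $\Delta(a_i,a_{i+1}) = d_i =: k$, we resume comparing $a_{i+1}$ with $b$ starting at position $k$, finding the first index where they differ (or reaching the end) in order to determine $d_{i+1}$ and the relation $a_{i+1}$ versus $b$.

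The key structural observation is that the sequence $d_1, d_2, \ldots$ is non-decreasing, and strictly increasing precisely in the third case above when the comparison continues past position $k$. This lets me bound the number of $\Sigma$-comparisons: the initial scan of $a_1$ against $b$ contributes at most $d_1 \leq n$ comparisons, and each subsequent step contributes at most $1 + (d_{i+1} - d_i)$ comparisons (zero in the skip case). Summing and telescoping, the total is bounded by $n + (d_{\text{final}} - d_1) + (t-1) \leq 2n + t$, giving the claimed $O(t+n)$. The integer comparisons of $\Delta$-values are each between integers bounded by $n+1$, so there are at most $t-1$ of them at cost $O(\log n)$ each, yielding the additional $t\log(n)$ time.

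For the second half of the lemma with $\Sigma$ the non-negative integers bounded in sum by $m$, I would refine the cost of each type of $\Sigma$-comparison. The initial comparison of $a_1$ and $b$ costs $O(n\max(\log(m/n),1))$ directly by Lemma~\ref{lemma:cmpvectors}. Each ``decision'' comparison (the final comparison in cases A, B, or the differing position in case C) costs $O(\log m)$, and there are at most $t$ such, contributing $O(t \log m)$. The crucial point is to handle the ``equality'' comparisons inside case C: since $d$ is non-decreasing and strictly increases only during case C, the intervals $[d_i, d_{i+1})$ of positions where we perform such equality comparisons across distinct case C steps are pairwise disjoint subsets of $\{1,\ldots,n\}$. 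At each such position $j$, the entries $a_{i+1}[j] = b[j]$ are equal and the comparison cost is $O(\log b[j])$; applying Lemma~\ref{lemma:bitsinvector} to the entries of $b$ restricted to this disjoint union bounds the total cost of equality comparisons by $O(n \max(\log(m/n),1))$. Combining with the $O(t \log n)$ cost of $\Delta$-comparisons gives the stated bound $O(n \max(\log(m/n),1) + t\log(\max(n,m)))$.

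The main obstacle will be the disjointness argument for the case C intervals together with the amortized counting that yields the $O(n+t)$ comparison bound; once this bookkeeping is in place, the refined complexity estimate follows cleanly by invoking the bit-count lemma from Section~\ref{sec:mon}.
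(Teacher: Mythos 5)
Your overall strategy -- a single forward scan over $a$ that uses the precomputed $\Delta(a_i,a_{i+1})$ values to skip $\Sigma$-comparisons, with the number of comparisons bounded by a monotone quantity -- is exactly the paper's approach. However, you have the two branches of the case analysis reversed. Lemma~\ref{lemma:lexusingdiff} says that among two elements both exceeding a common lower bound $u$, the one whose $\Delta$ from $u$ is \emph{smaller} is the \emph{larger} element. So if $\Delta(a_i,a_{i+1}) < d_i = \Delta(a_i,b)$, then $a_{i+1}$ diverges from $a_i$ earlier than $b$ does, hence $a_i < b < a_{i+1}$ with $\Delta(b,a_{i+1}) = \Delta(a_i,a_{i+1})$; this is the \emph{stop} case, not the skip case. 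Conversely, if $\Delta(a_i,a_{i+1}) > d_i$ then $a_{i+1} < b$ (or $a_{i+1}=a_i < b$) with $\Delta(a_{i+1},b) = d_i$, and this is the \emph{continue} case. As written, your branches would terminate the scan in exactly the wrong situation, and the $\Delta$-updates you attach to each branch do not match the (swapped) conclusions either. This is not merely cosmetic: the monotonicity of the sequence $d_1 \le d_2 \le \cdots$, which your whole telescoping and disjoint-interval accounting rests on, is a property of the \emph{correct} branching, so the complexity argument needs the branches fixed before it is valid.

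Once the branches are corrected, the rest of your argument goes through and is essentially equivalent to the paper's, phrased differently: the paper observes that each $\Sigma$-comparison advances one of the two pointers $i$ (into $a$) or $k$ (into $b$), hence at most $t+n$ comparisons; you instead telescope $\sum_i (1 + d_{i+1}-d_i)$ using the monotone sequence $d_i$, which is the same bound by a different bookkeeping. Likewise, your ``disjoint intervals $[d_i,d_{i+1})$'' observation for charging the equality comparisons against $b$'s bit-length is the same idea as the paper's ``the total time for increasing $k$ is $O(n\max(\log(m/n),1))$''. One further small omission: the case $a_i = a_{i+1}$ (i.e.\ $\Delta(a_i,a_{i+1}) = n+1$) cannot literally be fed into Lemma~\ref{lemma:lexusingdiff} because the lemma requires $u<v$ strictly; the paper handles it as a separate branch. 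With your branches corrected it is subsumed by the continue case, but this should be said explicitly since the lemma's hypotheses do not cover it.
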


\begin{proof}
The proof contains three parts. We (a) give an algorithm, (b) prove its correctness and (c) show that the complexity of the algorithm agrees with what was stated in the
lemma.

\vspace{0.2cm}

\noindent{\textbf{The algorithm}} 

\noindent
At stage $0$, compute $k = \Delta(a_1,b)$. If $a_1 < b$, continue with stage $1$. If $b \leq a_1$ we stop and return $0$ and $\Delta(b,a_1) =k$.

\vspace{0.1cm}
\noindent
At stage $i$, $1\leq i <t$, we suppose that $a_i < b$ and that $\Delta(a_i,b)$ is computed. 

\noindent
If $a_i = a_{i+1}$ (which is equivalent to $\Delta(a_i,a_{i+1})= s+1$), we have $a_i \leq a_{i+1} < b$. Thus, we set $\Delta(a_{i+1},b) = \Delta(a_i,b)$ 
and continue with stage $i+1$.  

\noindent
Else, if $\Delta(a_i,b) > \Delta(a_i,a_{i+1})$ then
$a_i < b < a_{i+1}$ and $\Delta(b,a_{i+1}) = \Delta(a_i,a_{i+1})$ by Lemma \ref{lemma:lexusingdiff}. Thus, we stop and return $i$, 
$\Delta(a_i,b)$ and $\Delta(b, a_{i+1})$ (which equals $\Delta(a_i,a_{i+1})$). 

\noindent
Else, if $\Delta(a_i, b) < \Delta(a_i,a_{i+1})$ then 
$a_i < a_{i+1} < b$ and $\Delta(a_{i+1},b) = \Delta(a_i,b)$, again by Lemma \ref{lemma:lexusingdiff}. We set $\Delta(a_{i+1}, b) = \Delta(a_i,b)$ and continue with stage $i+1$. 

\noindent
Else, if $\Delta(a_i,b) = \Delta(a_i,a_{i+1})=k$, compare $b_k$ and $a_{i+1,k}$, $b_{k+1}$ and $a_{i+1,k+1}$ and so on 
	until we either conclude (1)  $b_{k'} < a_{i+1,k'}$ (2) $b_{k'} > a_{i+1,k'}$ or (3) $b = a_{i+1}$, where in cases (1) and  (2), $k' = \Delta(b, a_{i+1})$.
	In case (1), we have $b < a_{i+1}$. We stop and return $i$, $\Delta(a_i,b)$ and $\Delta(a_{i+1}, b) = k'$.
	In case (2), we have $b > a_{i+1}$. We set $\Delta(a_{i+1}, b) = k'$ and continue with stage $i+1$. 
	In case (3), we have $b = a_{i+1}$. We stop and return $i$, $\Delta(a_i,b)$ and $\Delta(a_{i+1}, b) = t+1$.

\vspace{0.1cm}
\noindent
At stage $t$ we have $a_t<b$. Thus, we stop and return $t$ and $\Delta(a_t,b)$.

\vspace{0.2cm}

\noindent{\textbf{The correctness of the algorithm}} 

\noindent
By construction.

\vspace{0.2cm}

\noindent{\textbf{Complexity of the algorithm}} 

\noindent
There are two key indices that we update during the algorithm. The first ($i$) refers to a position in the list $a$, 
the second ($k$) refers to a position in the vector $b$. Notice that after each comparison, either $i$ or $k$ is increasing. Since both $i$ and $k$ are non-decreasing
it follows that the number of 
comparisons of elements in $\Sigma$ is at most $n+t$. 

The number of $\Delta$-comparisons during the algorithm is one per stage, that is, at most $t$. Every such comparison consists of comparing  
integers bounded by $n$. We conclude that the time needed for the integer comparisons is proportional to $t\log(n)$. 

Let now $\Sigma$ be the set of non-negative integers. Everytime we increase $i$, we make 
a comparison of an integer bounded by $m$, this gives time proportional to $t\log(m)$. However, when increasing $k$, we are in a situation where 
$a_{ik} = b_k$. Hence, the total timed used for the increasings of $k$ is proportional to $n\max(\log(m/n),1)$  by
\ref{lemma:cmpvectors}. Only once during the algorithm we will compare $a_{ik}$ and $b_k$ to conclude that they differ, this cost is $O(\log(m))$.
It follows that an upper bound for the algorithm is proportional to 
$$t\log(n) + t\log(m) + n\max(\log(m/n),1) + \log(m)$$
$$ =t\log(\max(m,n)) + n\max(\log(m/n),1).$$ 

\end{proof}

\begin{theorem} \label{thm:merge}
Let $a=(a_1, \ldots, a_t)$ and $b = (b_1, \ldots, b_s)$ be two lists of $n$-tuples of elements in an ordered set $\Sigma$. Suppose that $a$ and $b$ are sorted lexicographically with respect to the 
order $<$ in $\Sigma$. Suppose that we are given $\Delta(a_i,a_{i+1})$ for $i = 1, \ldots, t-1$ and $\Delta(b_i, b_{i+1})$  for $i=1, \ldots, s-1$.
We can merge $a$ and $b$ into a new list $c$ and compute the sequence $\Delta(c_1,c_2), \Delta(c_2,c_3), \ldots$ 
using $O(s n + t)$ comparisons plus time complexity $O(t \log(n))$. When $\Sigma$ is the non-negative integers and $\sum_i v_i\leq m$ for all $v \in a \cup b$, an upper bound for the time complexity of the algorithm is $O(\min(s,t) n \max(\log(m/n),1) + \max(s,t) \log(n+m))$. 
\end{theorem}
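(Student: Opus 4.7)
The plan is to iterate Lemma \ref{lemma:merge}, applying it once per element of the shorter list. Assume without loss of generality that $s\le t$.

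I would process the elements of $b$ in order and insert $b_1,b_2,\ldots,b_s$ into $a$, but with the essential optimization that the scan pointer in $a$ is carried over between successive insertions rather than being reset to position~$1$. This is possible because $b$ is sorted and the values $\Delta(b_q,b_{q+1})$ are given: once $b_q$ has been placed at position $p_q$ in the merged list, applying Lemma \ref{lemma:merge} to insert $b_{q+1}$ requires the sorted list $(b_q, a_{p_q+1},\ldots,a_t)$ together with the consecutive $\Delta$ values along it, and these are exactly the $\Delta(b_q,a_{p_q+1})$ output by the previous call plus the given $\Delta(a_j,a_{j+1})$. That invocation then hands back $\Delta(a_{p_{q+1}},b_{q+1})$ and $\Delta(b_{q+1},a_{p_{q+1}+1})$, feeding the next iteration and letting me read off the sequence $\Delta(c_r,c_{r+1})$ for the merged list $c$ on the fly. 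Correctness then reduces to the correctness of Lemma \ref{lemma:merge} together with the sortedness of $b$.

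For the complexity, let $t_q$ be the number of $a$-positions the pointer advances past during the insertion of $b_q$, so $\sum_q t_q\le t$. By Lemma \ref{lemma:merge} the $q$-th call uses $O(t_q+n)$ comparisons in $\Sigma$ and $O(t_q\log n)$ time in $\Delta$-comparisons; summing yields $O(t+sn)$ comparisons in $\Sigma$ and $O(t\log n)$ time in $\Delta$-comparisons (using $s\le t$), which is the first bound. For the integer instance the per-insertion time bound $O(t_q\log(\max(m,n))+n\max(\log(m/n),1))$ from Lemma \ref{lemma:merge} sums to $O(t\log(m+n)+sn\max(\log(m/n),1))$, matching the claimed $O(\max(s,t)\log(n+m)+\min(s,t)\,n\max(\log(m/n),1))$.

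The only delicate point is bookkeeping: verifying that at the entry to each call, exactly the $\Delta$ values required by Lemma \ref{lemma:merge} are on hand, and that the consecutive $\Delta$ values of $c$ can be read off as we go. Both are immediate from the case analysis inside Lemma \ref{lemma:merge}, so I do not expect any substantive obstacle beyond careful accounting.
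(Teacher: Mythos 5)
Your proposal follows essentially the same route as the paper: assume $s\le t$, make $s$ successive calls to the algorithm of Lemma \ref{lemma:merge}, carry the scan pointer forward, reuse the returned $\Delta$-values as the consecutive differences of the merged list, and sum the per-call costs to obtain the stated bounds. Your minor variant of prepending $b_q$ to the remaining tail of $a$ is just a clean way of realizing the stage-0 speed-up the paper discusses via Lemma \ref{lemma:minusingdiff}, so the two proofs are materially identical.
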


\begin{proof}
Suppose, without loss of generality, that $s < t$.
Let $i_1$ be the index returned after calling the algorithm in Lemma \ref{lemma:merge} with $a$ and $ b_1$. 
Without affecting the complexity of the algorithm, it is clear that we can modify it to give a list $c=(a_1, \ldots, a_{i_1},b_1)$. Suppose that $i_2$ is the index returned after calling the algorithm in Lemma \ref{lemma:merge} with $a_{i_1 +1}, \ldots, a_t$ and $b_2$. Again, it is clear that it is possible to update the list $c$ without affecting the complexity 
so that it reads
$(a_1, \ldots, a_{i_1}, b_1, a_{i_1+1}, \ldots, a_{i_2}, b_2)$. If we proceed in this way we obtain the sequence
$$a_1\leq \cdots\leq a_{i_1}< b_1 \leq a_{i_1 + 1} \leq \cdots \leq a_{i_2}< b_2 \leq \cdots < b_s \leq a_{i_s+1} \leq \cdots \leq a_t.$$
Since the algorithm in Lemma \ref{lemma:merge} returns $\Delta(b_{j}, a_{i_j+1})$ we only need to check the case when $a_{i_j} = a_{i_{j+1}}$ in order to conclude
that $\Delta(c_1,c_{2})$, $\Delta(c_2,c_3), \ldots$ is computed as a side affect of the calls to the modified algorithm. But when 
$a_{i_j} = a_{i_{j+1}}$ we have that $b_{j}$ and $b_{j+1}$ are consecutive and hence $\Delta(b_j,b_{j+1})$ is already computed by assumption. 

Although it does not affect the complexity, stage $0$ of the algorithm given in Lemma \ref{lemma:merge} can be modified. 
When calling with $a_{i_j+1}, \ldots, a_{t}$ and $b_{j+1}$ we can use that $\Delta(b_j,a_{i_j+1})$ already is computed. Indeed,
$\Delta(b_{j+1}, a_{i_j+1}) \geq \min(\Delta(b_j,a_{i_j+1}),\Delta(b_{j}, b_{j+1}))$ by Lemma \ref{lemma:minusingdiff}, so we could call 
the algorithm in Lemma \ref{lemma:merge} with the extra parameter 
$\min(\Delta(b_j,a_{i_j+1}),\Delta(b_{j}, b_{j+1}))$ to speed up the computation of $\Delta(b_{j+1}, a_{i_j+1})$ in stage $0$.

Since we make $s$ calls to the algorithm in Lemma \ref{lemma:merge} (or to be more precise, to the modified algorithm as defined above), 
we make $((i_1+1) + n) + ((i_2+1-(i_1+1) + n) + \cdots + ((i_s+1 - (i_{s-1}+1)) + n) = (i_s+1)+s n < t + sn$ comparisons of elements in $\Sigma$.
The time complexity for the integer comparison part is by Lemma \ref{lemma:merge} proportional to
$(i_1+1) \log(n) + (i_2+1-(i_1+1)) \log(n) + \cdots + (i_s+1 - (i_{s-1}+1)) \log(n) = (i_s+1) \log(n)<t \log(n)$.
If $\Sigma$ is the non-negative integers, then the time complexity of the algorithm becomes
$O(s n \max(\log(m/n),1) + t \log(n+m))$ by Lemma \ref{lemma:cmpvectors}.
\end{proof}

We now give two applications of the new merge algorithm. The idea of the first example is to make the algorithm clear to the reader, while the second example
shows the strength of the algorithm.

\begin{example} \label{example:mergesort}

Suppose that we want to merge the lists 

$$
a=(
x_1 x_3^2 x_4^2, 
x_1x_3^3,
x_1^2x_4, 
x_1^2x_2x_5,
x_1^2x_2x_4^2x_5, 
x_1^3)$$
and
$$ b = (
x_1, 
x_1x_3^2, 
x_1^2x_2x_4x_5, 
x_1^2x_2x_4^2x_5)$$
of monomials, sorted in increasing order with respect to lex and $x_1>x_2>x_3>x_4>x_5$. Since $\ov_{lex}(x^\alpha) = \alpha$, 
we will use the exponent vectors. Thus, in accordance with the notation above, 
$$a_1 = (1,0,2,2,0), a_2 = (1,0,3,0,0), a_3 = (2,0,0,1,0), a_4 = (2,1,0,0,1),$$ 
$$a_5 = (2,1,0,2,1), a_6 = (3,0,0,0,0)$$ and
$$b_1 = (1,0,0,0,0), b_2 = (1,0,2,0,0), b_3 = (2,1,0,1,1), b_4 = (2,1,0,2,1).$$ 

We begin by comparing $b_1$ and $a_1$. We see that $\Delta(b_1,a_1) = 3$ and 
$b_1 < a_1$. Since $\Delta(b_1,b_2) = 3$, we compare $b_{23}$ and $a_{13}$. They are equal, so 
we check the fourth index and conclude that $b_2 < a_1$.  
Now $\Delta(b_2,b_3) = 1$, so we conclude that $a_1<b_3$ only by checking the first index.
Since $\Delta(a_1,a_2) = 3> 1$, we also have $a_2<b_3$. But $\Delta(a_2,a_3) = 1$, so we compare
$a_3$ and $b_3$ from index $1$ and conclude that $a_3 < b_3$  and $\Delta(a_3,b_3)  = 2$. So far we have
$$b_1<b_2<a_1<a_2<a_3.$$
We see that $\Delta(a_3,a_4) = 2$, thus we compare $b_{32}$ and $a_{42}$. They are equal, so are 
$b_{33}$ and $b_{43}$, but $b_{34}>a_{44}$, hence $a_4 < b_3$ and $\Delta(b_3,a_4) = 4$. 
Since also $\Delta(a_4,a_5)=4$, we compare the fourth index of $b_3$ and $a_5$ to conclude that $b_3 < a_5$. 
Finally, since $\Delta(b_3,b_4) = 4$, we check the fourth and fifth indices of $b_4$ and $a_5$ to conclude that 
$b_4 = a_5$. We have 
$$b_1<b_2<a_1<a_2<a_3<a_4<b_3<b_4\leq a_5<a_6.$$
and the sequence of differences is
$$3,4,2,1,2,4,4,6,1.$$ 
\end{example}
%
%
\begin{example} Let $n = 2s$. Let 
$f = x_1x_3+x_2x_s$ and let $g = x_1 x_2 + x_2 x_3 + \cdots + x_{n-1} x_n$. Let $\prec$ be degrevlex with respect to
$x_1 \succ \cdots \succ x_n$. We see that the terms of $f$ and $g$ are written with respect to this order. 
Suppose that, during a Gr\"{o}bner basis computation, we want to compute the S-polynomial of 
$f$ and $g$, that is, we want a sorted expression of 
$S(f,g) = x_2 f - x_3 g$. 
This is the same as merging $x_2^2x_s$ and $(-x_2x_3^2, -x_3^2x_4, -x_3x_4x_5, -x_3x_5x_6, -x_3 x_6x_7, \ldots$, $-x_3 x_{n-2} x_n, -x_3 x_{n-1}x_n)$  
(together with an arithmetic operation in the case of equality).
For simplicity, we write these expressions as lists of order vectors and omit the coefficients. We get 
$$a = (3,\underbrace{0, \ldots, 0}_{ s \text{ times}},-1,0,\ldots, 0, -2)$$ and 
$$b = ((3,0,\ldots,0,-2,-1), (3,0,\ldots,0,-1,-2,0), (3,0,\ldots,0,-1,-1,-1,0),$$ 
$$(3,0,\ldots,0,-1,-1,0,-1,0), \ldots, (3,-1,-1,0,\ldots,0,-1,0)).$$ 
We assume that we are given the sequence of differences for $g$ a priori. Since the sequence of differences is closed under multiplication with a monomial, 
we obtain the sequence of differences for $b$. It reads $(n-2,n-3, \ldots, 2)$. 
Using the algorithm in Lemma \ref{lemma:merge}, we first compare $a$ and $b_1$. After
$s+2$ comparisons, we see that $a \prec b_1$. Since $\Delta(b_1,b_2) = n-2$ and $\Delta(a,b_1) = s+2$, we get (after comparing $n-2$ and $s+2$) 
that $a \succ b_2$ and that $\Delta(a,b_2) = s+2$. Continuing this way we see that 
$a \prec b_1, a \prec b_2$, $a \prec b_3, \ldots, a \prec b_{s-3},$ and $\Delta(a,b_i) = s+2$ for $i = 1, \ldots, s-3$, 
using 
$$s+2 + \underbrace{1 + \cdots + 1}_{ s-3 \text{ times}} = n-1$$ 
comparisons. We have 
$\Delta(b_{s-3}, b_{s-2}) = s+2$ and since also $\Delta(a,b_{s-3}) = s+2$, we compare $b_{s-2,s+2}$ and $a_{s+2}$ to conclude that $a \succ b_{2-2}$. 
In total, we have used $n-1 +2 = 2s+1$ comparisons. 

Proceeding in a naive way we would use $(s+2) \cdot (s-2) = s^2-4$ comparisons.
It should be remarked that one needs extra cost for the bookkeeping of $\Delta(a,b_i)$, for $i = 1, \ldots, s-2$, which is of the same magnitude as the comparisons, 
that is, to be fair, we should compare $2s+1 + s-2 = 3s - 1$ with $s^2-4$. 

\end{example}

\section{The BM-algorithm revised}
We first fix some notation.
If $x^\alpha$ is a monomial, we define its support, which we denote by $\supp(x^\alpha)$, to be the set of all 
$x_i$ such that $\alpha_i > 0$. If $M$ is a set of monomials, then we define the support of $M$ to be the union of the supports of the elements in $M$.
Let $I$ be an ideal in $S$ and let $B$ be any subset of $S$ such that $[B] = \{[b]: b \in B\}$ is a vector space basis for $S/I$. 
Here $[b]$ denotes the equivalence class in $S/I$ containing $b$. 
If $s$ is an element in $S$, its residue can be uniquely expressed as a linear combination of the elements in $[B]$, say $[s] = \sum c_i [b_i]$. 
The $S$-element $\sum c_i b_i$ is then called the normal form of $s$ with respect to $B$ and we write $\nf(s,B) = \sum c_i b_i$. 
We abuse notation and say that $B$ (instead of $[B]$) is a basis for $S/I$.

Let $\prec$ be an (admissible) monomial order. 
The initial ideal of $I$, denoted by $\ini(I)$, is the monomial ideal consisting of
all leading monomials of $I$ with respect to $\prec$. One of the characterizations of a set $G$ being a Gr\"{o}bner basis of an ideal $I$ with respect to a monomial order
$\prec$ is that $G \subseteq I$ and that the leading terms  of $G$ generate $\ini(I)$. An old theorem by Macaulay states that the residues of the 
monomials outside $\ini(I)$ form a $\Bbbk$-basis for the quotient $S/I$. The set of monomials outside $\ini(I)$ is closed under taking submonomials. 
A consequence of this is that we always have the unit $1$ in such a basis (given $\dim_{\Bbbk}(S/I) > 0$). 
Sets of monomials
which are closed under taking submonomials is called
order ideal of monomials, abbreviated by OIM.  

If $p$ is a point in $\Bbbk^n$ and $f$ is an element of $S$, we denote by $f(p)$ the evaluation of $f$ at $p$. 
When $P= \{p_1, \ldots, p_m\}$ is a set of points, $f(P) = (f(p_1), \ldots, f(p_m))$. If $F = \{f_1, \ldots, f_s\}$ is a set
of elements in $S$, then $F(P)$ is defined to be the $s \times m$ matrix whose $i$'th row is $f_i(P)$. 

The vanishing ideal $I(P)$ is the ideal consisting of all elements in $S$ which vanishes on all the points in $P$. 
If $f_1$ and $f_2$ are two elements in $S$ and $[f_1] = [f_2]$ in $S/I(P)$, then $f_1(p) = f_2(p)$ for $p \in P$. 
That a set $[B]$ of $m$ elements is a $\Bbbk$-basis for $S/I$ is equivalent to $\dim_{\Bbbk}(B(P)) = m$.

The BM-algorithm takes as input a set of points in $\Bbbk^n$ and a monomial order. It returns a Gr\"{o}bner basis $G$ of $I$ and the set $B$ of monomials outside $\ini(I)$. 
The BM-algorithm was first given in \cite{BM}. During the years it has been reformulated and modified. In the paper \cite{FGLM}, the ideas of 
the BM-algorithm was used to switch between different Gr\"{o}bner bases of a zero-dimensional ideal. In the unifying paper \cite{MMM}  it was shown that
both the BM- and the FGLM-algorithm can be seen as an algorithm that computes a Gr\"{o}bner basis from an ideal defined by functionals. The complexity studies given 
in \cite{MMM} apply to the BM-algorithm and in fact, \cite{MMM} is by tradition the paper which one refers to when complexity issues of the BM-algorithm
are discussed. However, most of the complexity studies in \cite{MMM} are done by referring to the paper \cite{FGLM}. 

We will first discuss the complexity studies of the BM-algorithm and postpone the connection with ideals defined by functionals
to section \ref{subsec:FGLM}. 

\subsection{Two formulations of the BM-algorithm}

We first try to describe the ideas of the BM-algorithm without going into details. 
The algorithm uses a list $L$ of possible basis-candidates, a list $G$ of the partial Gr\"{o}bner basis and a list
$B$ of the partial basis-elements. In the main loop, one checks if 
$l(P) \in \spann_{\Bbbk}(B(P))$, where $l$ is the least element in $L$ with respect to $\prec$. If it was, write $l$ as a linear combination
$\sum_i c_ib_i$ of the elements in $B$ and insert $l -  \sum_i c_i b_i$ into $G$. Otherwise, insert $l$ into $B$ and 
update $L$ with new basis candidates.

We give below the formulation of the BM-algorithm as given in \cite{Abbottetal}.
When the algorithm terminates, $G$ is the Gr\"{o}bner basis with respect to $\prec$ and $B$ is the
complement of the initial ideal with respect to $\prec$.

\begin{itemize}
\item[\textbf{C1}] Start with empty lists $G = B = R = [~]$ a list $L = [1]$, and a matrix 
$C = (c_{ij})$ over $\Bbbk$ with  $m$ columns and initially zero number of rows. 

\item[\textbf{C2}] If $L = [~]$, return the pair $[G,B]$ and stop. Otherwise, choose the monomial 
$t = \min_{\prec}(L)$, the smallest according to the ordering $\prec$. Delete $t$ from $L$.

\item[\textbf{C3}] Compute the evaluation vector $(t(p_1), \ldots, t(p_m)) \in \Bbbk^m$, and reduce it against
the rows of $C$ to obtain 
$$(v_1, \ldots, v_m) = (t(p_1), \ldots, t(p_m)) - \sum_i a_i (c_{i1}, \ldots, c_{im}) \hspace{1.5cm} a_i \in \Bbbk.$$

\item[\textbf{C4}] If $(v_1, \ldots, v_m) = (0, \ldots, 0)$, then append the polynomial $t - \sum_i a_i r_i$ to the list $G$,
where $r_i$ is the $i$'th element of $R$. Continue with step C2.
l
\item[\textbf{C5}] If $(v_1, \ldots, v_m) \neq (0, \ldots, 0)$, then add $(v_1, \ldots, v_m)$ as a new row to $C$, and 
$t - \sum_i a_i r_i$ as a new element to $R$. Append the power product $t$ to $B$, and add to $L$ those elements of 
$\{x_1t, \ldots, x_nt\}$ which are neither multiples of an element of $L$ nor of $\ini(G)$. Continue with step C2.
\end{itemize}
The authors in \cite{Abbottetal} claims that this is the same as the algorithm restricted to the BM-situation which appeared in \cite{MMM}, 
but this is not exactly the case. 
To get the algorithm given in \cite{MMM} restricted to the BM-situation using the five-step description, we need to reformulate steps 2 and 5.

\begin{itemize}

\item[\textbf{C2'}] If $L = [~]$, return the pair $[G,B]$ and stop. Otherwise, choose the power product 
$t = \min_{\prec}(L)$, the smallest according to the ordering $\prec$. Delete $t$ from $L$.
If $t$ is a multiple of an element in $\ini(G)$, then repeat this step. Else, continue with step C3.

\item[\textbf{C5'}] If $(v_1, \ldots, v_m) \neq (0, \ldots, 0)$, then add $(v_1, \ldots, v_m)$ as a new row to $C$, and 
$t - \sum_i a_i r_i$ as a new element to $R$. Append the power product $t$ to $B$, and 
merge $\{x_1t, \ldots, x_nt\}$ and $L$. Continue with step C2'.

\end{itemize}

\begin{lemma}
The output from the two algorithms given above agree and $G$ will be a reduced Gr\"{o}bner basis for I(P) and $[B]$ will be a basis for $S/I(P)$. 
\end{lemma}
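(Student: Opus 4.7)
The proof breaks into three parts: (a) showing that the two algorithms produce identical outputs; (b) establishing, by loop invariants, that each step is sound; and (c) concluding with a dimension count.

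For (a), the two variants differ only in \emph{when} the filter ``not a multiple of $\ini(G)$'' (and, for the first variant, also ``not a multiple of an element of $L$'') is applied: at insertion in step C5 versus at extraction in step C2'. My plan is to verify by induction on the iteration count that both variants enter C3 with the same monomial $t$; the equality of $G$, $B$, $R$, $C$ then follows since C3, C4 and the body of C5/C5' coincide textually. The substantive direction is to show that any $x_j t$ which the first variant's C5 refuses to insert (because it is a multiple of some $t'' \in L$) is nevertheless processed exactly when the second variant would process it: since $t'' \preceq x_j t$, $t''$ is extracted first, and either $t''$ enters $\ini(G)$ (so $x_j t$ becomes a multiple of $\ini(G)$ and is dropped by the second variant at C2', matching its absence in the first) or $t''$ joins $B$, whereupon the first variant's C5 does insert $x_j t$. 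Symmetrically, any multiple of $\ini(G)$ that the second variant carries in $L$ is harmlessly dropped at C2'. This ``regeneration'' bookkeeping is the main technical point in the proof.

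For (b) I work with the MMM-style variant and maintain the invariants
\begin{itemize}
\item[(I1)] $B$ is an order ideal of monomials;
\item[(I2)] the rows of $B(P)$ are $\Bbbk$-linearly independent;
\item[(I3)] every $g \in G$ lies in $I(P)$, its $\prec$-leading monomial is the $t$ extracted at the moment of its creation, and its tail lies in $\spann_\Bbbk(B)$;
\item[(I4)] the $i$-th row of $C$ equals $r_i(P)$, where $r_i \in R$ is the normal form associated to the $i$-th element of $B$.
\end{itemize}
(I4) is immediate from the assignments. (I2) holds because $t$ is appended to $B$ exactly when the reduced vector $(v_1,\ldots,v_m)$ is non-zero, i.e.\ when $t(P)$ is independent of $B(P)$. (I3) holds because the polynomial $t-\sum_i a_i r_i$ appended to $G$ evaluates to $(v_1,\ldots,v_m)=0$ on $P$, and each $r_i$ has all its monomials in $B$ and is hence $\prec t$. (I1) follows by induction on $\prec$: the newly added $t$ was enqueued only because $t = x_k v$ for some $v\in B$, and combined with the inductive OIM-property of $v$ every $t/x_i$ has been enqueued in $L$; being $\prec t$ it was extracted before $t$, and it could not have entered $\ini(G)$ (else $t$ would have been dropped at C2'), so $t/x_i\in B$.

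For (c), invariant (I2) gives $|B|\le m$, which caps the iterations that can grow $B$; after that point every iteration either discards from $L$ or appends to $G$ without enqueuing fresh border monomials, so $|L|$ strictly decreases until $L=[~]$. At termination every monomial $t\in S$ lies in $B\cup\langle\ini(G)\rangle$: a $\prec$-minimal counterexample $t$ has all proper divisors in $B$ by minimality, so one of those divisors would have enqueued $t$ into $L$ when it joined $B$, contradicting $L=[~]$. Hence $[B]$ spans $S/\langle\ini(G)\rangle$, and via (I3) it spans $S/I(P)$. Combined with (I2) and the classical $\dim_\Bbbk S/I(P)=m$, this forces $|B|=m$, so $[B]$ is a basis for $S/I(P)$ and $\ini(G)=\ini(I(P))$, i.e.\ $G$ is a Gröbner basis. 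Reducedness is then immediate: each tail lies in $\spann_\Bbbk(B)$ and $B\cap\ini(G)=\varnothing$.
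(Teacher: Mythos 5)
Your proposal tackles all the needed pieces, and parts (b)--(c) are a sound self-contained correctness argument for the C2'/C5' variant (the paper simply cites \cite{MMM} for this). However, part (a) has a genuine gap, and part (a) is exactly the substance of the paper's own proof, so this is worth flagging.

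You write that when a candidate $x_j t$ is refused at step C5 because it is a multiple of some $t''\in L$, then ``$t''$ joins $B$, whereupon the first variant's C5 does insert $x_j t$.'' This is false in general: when $t''$ joins $B$ the first variant only considers the degree-$(|t''|+1)$ monomials $x_1 t'', \ldots, x_n t''$, and $x_j t$ need not be among them. Concretely, take lex with $x_1\succ x_2\succ x_3$, let $t=x_2^3$ join $B$ while $t''=x_1x_2$ is still in $L$; the candidate $x_j t = x_1 x_2^3$ is blocked because $t''\mid x_1x_2^3$, yet $x_1x_2^3 \notin \{x_1 t'', x_2 t'', x_3 t''\}$, so nothing is inserted when $t''$ joins $B$. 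The insertion only happens one or more stages later. The paper avoids this by arguing along the \emph{largest proper divisor} $u$ of $x_j t$ (with $|u|=|x_jt|-1$): once $x_j t$ is shown to lie outside $\ini(I)$, all its proper divisors lie in the eventual $B$; by the time the largest one $u$ is appended to $B$, every smaller divisor has already been processed and removed from $L$, so the candidate $x_j u = x_j t$ passes the filter in step C5. Your blocker $t''$ plays no canonical role; the paper's $u$ does, and this is what closes the argument. You could repair your proof by iterating the blocking argument, or, more simply, by switching to the largest-proper-divisor device as in the paper. The rest of your proof is fine, and in fact your invariants (I1)--(I4) and the dimension count in (c) give a cleaner, more self-contained justification of the C2'/C5' variant than the paper's bare citation.
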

\begin{proof}
It was proved \cite{MMM} that using steps C2' and C5', $G$ will be a reduced Gr\"{o}bner basis and $[B]$ will be a basis for $S/I$. To prove that the two algorithms
agree, one needs to show that the elements which are multiples of an element in $L$ and not inserted into the list $L$ during stage C5 are not needed in the computations.
We now sketch this argument. Suppose that 
$x_it$ is an element which is a multiple of an element in $L$ during the algorithm using steps C2 and C5 and suppose also that 
$x_it$ is in the complement of $\ini(I)$. Then all submonomials of 
$x_it$ is in $B$. Suppose that $u$ is the largest. Clearly $|u| = |x_it|-1$. Thus, $x_it = x_ju$ for some $x_j$. At some stage during the algorithm using 
steps C2 and C5, $u$ will be the least element in $L$ and accordingly, no submonomials of $x_it$ will be left in $L$. Thus, $x_ju = x_it$ will be added to $L$ in step
C5'.
\end{proof}

These two variants give the same arithmetic complexity, which is reported \cite{MMM,FGLM,Abbottetal} to be $O(nm^3)$ arithmetic operations. 
We now give a better bound.

\begin{proposition} \label{prop:complexityarithmeticBM}
The arithmetic complexity of the algorithms given above agree and an upper bound is $O(nm^2+\min(m,n)m^3)$.
\end{proposition}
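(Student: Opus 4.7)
The plan is to bound the arithmetic cost by separately controlling (i) the cost per iteration of step C3 and (ii) the total number $T$ of monomials that reach step C3, giving a total bound of $O(Tm^2)$ which we will then evaluate.

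For (i), the key observation is that whenever a monomial $t$ reaches step C3, it was inserted into $L$ in step C5 or C5' as $t = x_i \cdot t'$ for some $t'$ that was simultaneously being added to $B$. Since elements are never removed from $B$, the evaluation vector $t'(P)$ is still stored as a row of $C$ at the time $t$ is processed. Therefore $t(P)$ is obtained as the componentwise product of $t'(P)$ with the stored vector $(p_{1,i}, \ldots, p_{m,i})$, at a cost of $m$ field multiplications. Maintaining $C$ in row-echelon form, the subsequent reduction against its at most $|B| \leq m$ rows costs $O(m^2)$ field operations. Thus the per-monomial arithmetic cost is $O(m^2)$.

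For (ii), every processed monomial is either appended to $B$ (step C5/C5') or produces a new element of $G$ (step C4), so $T = |B| + |G|$. We have $|B| = m$ since $[B]$ is a $\Bbbk$-basis of $S/I(P)$ which has dimension $m$. To control $|G|$, let $\Ess = \supp(B)$ and $\bar{n} = |\Ess|$, and split the minimal generators of $\ini(I(P))$ into two classes. A minimal generator involving a variable $x_i \notin \Ess$ must itself equal $x_i$, for any proper divisor of such a generator already lies in $\ini(I(P))$; this yields $n - \bar{n}$ generators. A minimal generator lying in $\Bbbk[\Ess]$ can be written as $x_j \cdot b$ with $x_j \in \Ess$ and $b \in B$, since minimality forces $g/x_j \notin \ini(I(P))$, giving at most $\bar{n} \cdot m$ generators. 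Combining, $|G| \leq (n-\bar{n}) + \bar{n} m$.

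Since $1 \in B$ and $|B| = m$, we have $\bar{n} \leq m-1$, hence $\bar{n} \leq \min(n, m-1)$. Therefore $T = m + |G| = O(n + \min(n,m)\cdot m)$, and the total arithmetic complexity is $O(Tm^2) = O(nm^2 + \min(n,m)\cdot m^3)$. The two variants of the algorithm differ only in bookkeeping that involves no field arithmetic, so the bound applies to both. The main delicate point is the partition of the minimal generators of $\ini(I(P))$ according to $\Ess$, since this is what yields the improvement over the standard $O(nm^3)$ bound in the regime $n \gg m$; the remainder is routine counting.
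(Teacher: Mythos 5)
Your proof is correct and follows essentially the same approach as the paper: bound the per-monomial arithmetic cost by $O(m^2)$ (evaluation via one variable multiplication plus row reduction) and then bound the number of processed monomials by $m + |G|$, with $|G| = O(n + \min(n,m)m)$ via the observation that $|\supp(B)| \le \min(n,m-1)$. The only stylistic difference is that you count $|G|$ by directly partitioning the minimal generators of $\ini(I(P))$ according to whether they lie in $\Bbbk[\Ess]$, whereas the paper counts operationally how many candidates added to $L$ can end up in $\ini(G)$; the two counts coincide, and your version is arguably cleaner (it also sidesteps a $C4$/$C5$ labeling slip in the paper's own proof). One small point worth adding: step C4 also costs $O(m^2)$ field operations (forming $t - \sum_i a_i r_i$, where each $r_i$ has support in $B$), so the $O(m^2)$ per-monomial bound covers it as well.
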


\begin{proof}
The arithmetic operations are performed in steps C3 and C4. Step C3 involves an evaluation and a row reduction. To compute 
$(t(p_1), \ldots, t(p_m))$ requires $m$ multiplications, since $t = x_it'$ for some $x_i$ and some $t'$ and 
$t'(p_1), \ldots, t'(p_m)$ already has been evaluated. The row reduction requires 
$O(m^2)$ arithmetic operations. Step C4 consist of expressing $\sum_i a_i r_i$ in the basis and is also an $O(m^2)$ operation. 
Notice that the number of calls to C4 is exactly $|B| = m$, the number of calls to C5/C5' is exactly $|G|$ and thus, the number of calls to C3 is exactly 
$m + |G|$. Thus, the number of arithmetic operations is proportional to $(m + |G|) \cdot m^2)$, so it is enough to prove that $|G|$ is proportional to $n+\min(m,n)m$.

The number of variables in $B$ is at most $\min(n,m-1)$ since the basis consist of $m$ elements and $1 \in B$. 
Accordingly, if we let $s$ denote the number of variables in $\ini(G)$, then $n-\min(n,m-1) \leq s \leq n$ ($s = n$ only if $m =1$).
Now notice that if $x_it$ is an element inserted into $L$, then, since $x_i<x_it$ and $x_i$ 
is inserted into $L$ during the first step of the algorithm, $x_i$ will be treated before $x_it$. This shows that  
when we add a new element $t$ to $B$ and $\{x_1t, \ldots, x_nt\}$ to $L$, we know that at most $n-s$ of these elements 
would be added to $\ini(G)$, because if $x_i \in \ini(G)$, then $x_it \in \ini(G)$.

Since we add $m$ elements to $B$, we see that the elements of degree more than one in $\ini(G)$ is at most $m(n-s)$.
We conclude that the number of elements in $\ini(G)$ is at most $m(n-s) + s< m\min(n,m-1) + n$.

\end{proof}
 
The parts of the algorithms that concern monomial manipulations 
are harder to analyze. As stated in the introduction, the number of integer comparisons in the monomial manipulation-part 
is reported to be proportional to $n^2m^2$, assuming a standard order. We agree that this is an upper bound for the algorithm using steps
C2' and C5'. Since it is not explained in \cite{Abbottetal} how to check if $t$ is a multiple of an element in $L$ or of $\ini(G)$, 
the actual behavior of this algorithm might be worse than the algorithm using step C2' and C5', although the list $L$ during the former algorithm 
contains less elements than the list $L$ during the latter. 

To check if $t$ is a multiple of an element in $\ini(G)$ using step C2' and C5' is simple and is due to the following nice observation given in \cite{FGLM}.

\begin{lemma}
Using steps C2' and C5', to check if $t$ is a multiple of $\ini(G)$ can be replaced by 
checking if $|\supp(t)|> \Occ(t)$, where $\Occ(t)$ denote the number of copies of $t$ in $L$.
\end{lemma}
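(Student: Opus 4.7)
The plan is to characterize $\Occ(t)$ combinatorially in terms of $B$, and then read off membership in $\ini(G)$ from that characterization. First I would observe that the only mechanism by which a copy of $t$ is inserted into $L$ is step C5': when a basis element $u$ is appended to $B$, every $x_i u$ is merged into $L$. Therefore $\Occ(t)$ counts precisely the variables $x_i \in \supp(t)$ for which the proper divisor $t/x_i$ has already been added to $B$ by the time we reach $t$.

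Next I would set $s_i := t/x_i$ for each $x_i \in \supp(t)$. Since $\prec$ is admissible, $s_i \prec t$, so each $s_i$ has been dealt with strictly before $t$ becomes $\min_{\prec}(L)$. I would then argue that each $s_i$ ends up either in $B$ or in $M := \ini(G)$: a monomial is removed from $L$ either by being processed (in which case C4 or C5' applies, placing it into $G$ — hence into the generators of $M$ — or into $B$), or by being discarded at step C2' as a multiple of $\ini(G)$, which again places it in $M$.

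The crux is the equivalence ``$t \in M$ iff some $s_i \in M$''. The easy direction is that $M$ is an ideal, so $s_i \in M$ forces $t = x_i s_i \in M$. The subtle direction, and the main obstacle I anticipate, is showing that if $t \in M$ then the witnessing generator $g \in \ini(G)$ can be taken to be a \emph{proper} divisor of $t$. This requires the timing observation that $t$, being still in $L$ at the moment we test it, has never been appended to $G$ via step C4; since every element added to $G$ in C4 has its leading monomial equal to the currently chosen $\min_\prec(L)$, no polynomial in $G$ has leading term $t$, so $t \notin \{\,\ini(g') : g' \in G\,\}$. Granted this, $t/g \neq 1$, and picking any $x_i \in \supp(t/g) \subseteq \supp(t)$ yields $g \mid s_i$, so $s_i \in M$.

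Combining the pieces gives: $t \notin M$ iff every $s_i$ lies in $B$, iff $\Occ(t) = |\supp(t)|$. Equivalently, $t$ is a multiple of $\ini(G)$ iff $|\supp(t)| > \Occ(t)$, which is exactly the replacement condition claimed. I would write the proof in two short steps (the count characterization of $\Occ(t)$, and the divisor equivalence), and flag the ``proper divisor'' point as the only nontrivial step.
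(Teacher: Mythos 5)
Your proof is correct and follows essentially the same approach as the paper's: identify $\Occ(t)$ with the number of degree-$(|t|-1)$ divisors of $t$ lying in $B$, and then translate "some such divisor is missing from $B$" into "$t$ is a multiple of $\ini(G)$." The paper compresses the final equivalence into one sentence, whereas you make the key "proper divisor" timing observation explicit, which is a worthwhile clarification rather than a departure.
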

\begin{proof}
Let $t$ be the first element in $L$. There exists exactly $\supp(t)$ submonomials of $t$ of degree $|t|-1$. All copies of $t$ comes from a 
submonomial of of $t$ of degree $|t|-1$ in $B$, and each submonomial gives rise to exactly one copy. 
It follows that if there exists a submonomial of $t$ not in $B$ then $|\supp(t)|> \Occ(t)$ and vice versa. But the first statement is equivalent with $t$ being a multiple of
an element in $\ini(G)$.
\end{proof}
We remark that this simplification of the check does not apply when using steps C2 and C5.


\subsection{Optimizing the BM-algorithm} \label{sec:optbm}

In order to improve the time complexity during the  monomial manipulations part, we will use a projection technique together with Theorem \ref{thm:merge}. 
The idea is to identify a set $\Ess$ of the variables with the property that $\supp(B) \subseteq \Ess$ and $|\Ess|\leq \min(m-1,n)$. Once the set $\Ess$ is identified, we only
consider monomials in the monoid generated by $\Ess$, hence the associated order vectors for the monomials is of length bounded by
$\min(m-1,n)$. 

The projection technique is covered in the following lemma and is in some sense inspired by \cite{JustStigler06,JustStigler07}.

\begin{lemma} \label{lemma:suppB}
Let $\prec$ be a monomial order and let 
$p_1, \ldots, p_m$ be distinct points in $\Bbbk^n$. 
Then we can determine a subset $\Ess$ of $\{x_1, \ldots, x_n\}$ with the following properties
\begin{itemize}
\item  $\supp(B) \subseteq \Ess$
\item  $|\Ess| \leq \min(m-1,n)$
\item $\dim_{\Bbbk}(\Ess(P)) = |\Ess|$ ( $\Ess(P)$ has full rang.)
\item $x_k - \sum_j c_{kj} x_{i_j} \in I(P)$ where $x_k \succ x_{i_j}$ if $c_{kj} \neq 0$.
\end{itemize}
 in $O(nm^2)$ arithmetic operations plus time complexity $\Varord_{\prec}(n)$. 
\end{lemma}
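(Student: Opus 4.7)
The plan is to construct $\Ess$ by incremental Gaussian row-reduction on the evaluation vectors of $1, x_1, \ldots, x_n$, processing the variables in $\prec$-increasing order. First I would spend $\Varord_\prec(n)$ time to sort the variables so that $x_{i_1} \prec x_{i_2} \prec \cdots \prec x_{i_n}$. Then I would maintain a reduced-echelon basis $R$ of the row space accumulated so far, starting by inserting the row $1(P)=(1,\ldots,1)$. For each $k=1,\ldots,n$ I would reduce $x_{i_k}(P)$ against the rows currently in $R$: if the result is nonzero, append it to $R$ (storing the pivot position) and place $x_{i_k}$ in $\Ess$; if the result is zero, record the coefficients $c_0, c_{k,j}$ that have accumulated during the reduction, which express $x_{i_k}(P)$ as $c_0\cdot 1(P)+\sum_{x_{i_j}\in\Ess,\,x_{i_j}\prec x_{i_k}} c_{k,j}\,x_{i_j}(P)$.

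Property (3) is automatic, since the rows of $\Ess(P)$ are a subset of a reduced-echelon basis and hence linearly independent. For (2), observe that $1(P)$ is nonzero and inserted first, so the final rank is $1+|\Ess|\le m$, giving $|\Ess|\le m-1$; the bound $|\Ess|\le n$ is trivial. Property (4) follows directly from the recorded combination: the polynomial $x_{i_k}-c_0-\sum_j c_{k,j}\,x_{i_j}$ vanishes on $P$, and since $x_{i_k}$ dominates every monomial appearing on the right-hand side, it is the leading monomial (interpreting the constant $c_0$ as the coefficient of the smaller monomial $1$ in the stated sum). From (4) we obtain (1): if $x_k\notin\Ess$ then $x_k\in\ini(I(P))$, and because $B$ is an order ideal of monomials (hence closed under taking divisors), no monomial divisible by $x_k$ can lie in $B$, so $\supp(B)\subseteq\Ess$.

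The main obstacle is the complexity accounting. Let $r_k$ denote the size of $R$ just before the $k$-th reduction; reducing a new row against the current $R$ costs $O(r_k m)$ arithmetic operations, and the echelon update is $O(m)$. Since $r_k$ is non-decreasing and bounded by $\min(m,n+1)$, we have
\[
\sum_{k=0}^{n} r_k \;\le\; (n+1)\cdot\min(m,n+1),
\]
so the total arithmetic cost is $O\bigl(nm\cdot\min(n,m)\bigr)=O(nm^2)$. Adding the initial $\Varord_\prec(n)$ time for sorting the variables yields the claimed complexity.
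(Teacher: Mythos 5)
Your proposal is correct and follows essentially the same approach as the paper: sort the variables by $\prec$, then process evaluation vectors from smallest to largest, adding $x_{i_k}$ to $\Ess$ exactly when $x_{i_k}(P)$ is linearly independent from $1(P)$ and the previously accepted variables' evaluations, using incremental row reduction. Your complexity accounting ($O(nm\min(n,m))$, hence $O(nm^2)$) is in fact a bit sharper than the paper's blunt ``each test is $O(m^2)$, repeated $n$ times,'' but the underlying algorithm and the derivations of properties (1)--(4) are the same.
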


\begin{proof}
We first determine $i_1, \ldots, i_n$ such that $x_{i_1} \succ \cdots \succ x_{i_n}$ which has time complexity $\Varord_{\prec}(n)$.
Let
$$
E_j =  \left\{ \begin{array}{ll}
\{\} & \text{ if } j = 0\\
E_{j-1} \cup \{x_{i_{n-j+1}}\}  & \text{ if } x_{i_{n-j}}(P) \notin \spann_{\Bbbk}\{1(P),E_{j-1}(P) \}\\
E_{j-1} & \text{ otherwise }
\end{array}
\right.
$$
and let $\Ess = E_n.$ 
It is clear that if $x_k \notin \Ess$, then $x_k \in \ini(I)$, since $x_k$ can be written as a linear combination of smaller elements, hence
$\supp(B) \subseteq \Ess$. By a dimension argument, we have $|\Ess| \leq m-1$ and since $\Ess$ is a subset of the variables, clearly $|\Ess| \leq n$.
By construction $\dim_{\Bbbk}(\Ess(P)) = |\Ess|$.
To determine if $x_{i_{n-j+1}}(P) \notin \spann_{\Bbbk}\{1(P),E_{j-1}(P)$ 
is $O(m^2)$ arithmetic operations by using a matrix representation. This is repeated $n$ times, which gives the arithmetic complexity $O(nm^2)$.
Finally when $x_k \notin \Ess$ we obtain a the expression $x_k = \sum_j c_{kj} x_{i_j}$ mod $I(P)$ as a side effect of the matrix representation.
\end{proof}

Let $\pi$ be the natural projection from 
$\Bbbk^n$ to $\Bbbk^{\overline{n}}$ with respect to $\Ess = \{x_{i_1}, \ldots, x_{i_{\overline{n}}}\}$, that is,  
$\pi((a_1, \ldots, a_n)) = (a_{i_1}, \ldots, a_{i_{\overline{n}}})$. Let $\pi^*$ be the corresponding monomorphism
from $T = \Bbbk[y_{i_{1}}, \ldots, y_{i_{\overline{n}}}]$ to $S$ given by $y_{i_j} \mapsto x_{i_j}$. If $f$ is any element in 
$T$, then by construction $\pi^*(f)(p) = f(\pi(p))$. This shows that if 
$\Ess(P)$ has full rang, so has $(\pi^*)^{-1}(\Ess)(\pi(P))$. It follows that the points in $\pi(P)$ are distinct. 
This leads us to use the following isomorphism result.

\begin{lemma} \label{lemma:iso}
Let $p_1, \ldots, p_m$ be distinct points i $\Bbbk^n$. Let $I$ be the vanishing ideal with respect to these points. 
Let $\pi$ be a projection from $\Bbbk^n$ to $\Bbbk^{\overline{n}}$ such that $\pi(p_1), \ldots, \pi(p_m)$ are distinct. 
Let $T = \Bbbk[y_{i_1}, \ldots y_{i_{\overline{n}}}]$ and let $J$ be the vanishing ideal with respect to $\pi(p_1), \ldots, \pi(p_m).$
Then $S/I$ and $T/J$ are isomorphic as algebras.
\end{lemma}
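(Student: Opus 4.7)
The plan is to exhibit an explicit $\Bbbk$-algebra homomorphism $T/J \to S/I$ and show it is bijective by a dimension count. First I would introduce the comorphism $\pi^\ast\colon T \to S$ sending $y_{i_j} \mapsto x_{i_j}$; this is already described in the paragraph immediately preceding the lemma, and it satisfies the key identity $\pi^\ast(f)(p) = f(\pi(p))$ for every $f \in T$ and $p \in \Bbbk^n$.

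Next I would compose with the canonical surjection and let $\phi\colon T \to S/I$ be the map $f \mapsto [\pi^\ast(f)]$. The central computation is to identify $\ker(\phi)$ with $J$: by the identity above, $\pi^\ast(f) \in I$ if and only if $\pi^\ast(f)(p_k) = 0$ for all $k = 1, \ldots, m$, which happens iff $f(\pi(p_k)) = 0$ for all $k$, i.e. iff $f \in J$. Hence $\phi$ factors through an injective $\Bbbk$-algebra homomorphism $\overline{\phi}\colon T/J \hookrightarrow S/I$.

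To conclude I would invoke a dimension count. Since $p_1, \ldots, p_m$ are distinct, the evaluation map $S \to \Bbbk^m$, $f \mapsto (f(p_1), \ldots, f(p_m))$, is surjective (Lagrange interpolation) with kernel $I$, so $\dim_{\Bbbk}(S/I) = m$. By the hypothesis that $\pi(p_1), \ldots, \pi(p_m)$ are likewise distinct, the same argument gives $\dim_{\Bbbk}(T/J) = m$. An injective $\Bbbk$-linear map between finite-dimensional vector spaces of equal dimension is an isomorphism, so $\overline{\phi}$ is the desired algebra isomorphism.

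There is no real obstacle here; the only point that deserves care is the verification that $\ker(\phi) = J$, which hinges on the projection identity $\pi^\ast(f)(p) = f(\pi(p))$, and the invocation of the fact that vanishing ideals of $m$ distinct points produce $m$-dimensional quotient algebras (a standard consequence of interpolation, and consistent with Macaulay's theorem recalled earlier in the paper).
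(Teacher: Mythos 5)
Your proposal is correct and follows essentially the same route as the paper's own proof: use the comorphism identity $\pi^*(f)(p) = f(\pi(p))$ to show $f \in J \Leftrightarrow \pi^*(f) \in I$, descend to a monomorphism $T/J \hookrightarrow S/I$, and finish with a dimension count. The only difference is that you make the dimension count explicit (both quotients have dimension $m$ via interpolation), whereas the paper states the equality of dimensions directly.
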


\begin{proof}
Let $\pi^*$ be the map defined above. For $f \in T$ we have $\pi^*(f)(p) = f(\pi(p))$. Notice that $f \in J$ is equivalent to 
$f(\pi(q_i)) = 0, \forall i$, which is equivalent
to $\pi^*(f)(p_i) = 0, \forall i$, which is equivalent to $\pi^*(f) \in I$. This allows us to extend  $\pi^*$ to a monomorphism from $T/J$ to $S/I$. Since 
$\pi(p_1), \ldots, \pi(p_m)$ are distinct, we have $\dim_{\Bbbk} (T/J) = \dim_{\Bbbk} (S/I)$ and thus, the extension of $\pi^*$ is an isomorphism of algebras.
\end{proof}

\begin{lemma} \label{lemma:isoB}
Let $\prec$ be a monomial order on $S$. Let $\Ess$ be 
a subset of the variables such that $\supp(B) \subseteq \Ess$, $|\Ess| \leq \min(m-1,n)$ and $\dim_{\Bbbk}(\Ess(P)) = |\Ess|$, 
where $B$ is the monomials outside $\ini(I(P))$ 
with respect to $\prec$. Let $\pi$ be the projection defined by $\pi((a_1, \ldots, a_n)) = (a_{i_1}, \ldots, a_{i_{\overline{n}}})$ and let 
$\pi^*$ be the corresponding monomorphism from 
$T = \Bbbk[y_{i_{1}}, \ldots, y_{i_{\overline{n}}}]$ to $S$. Let $\prec'$ be the monomial order defined by 
$y^{\alpha} \prec' y^{\beta}$ if $\pi^*(y^{\alpha}) \prec \pi^*(y^{\beta})$. Let $B' = \{b_1', \ldots, b_m'\}$ be the set of monomials outside
$\ini(I(\pi(P))$ with respect to $\prec'$. Then $\pi^*(B') = B$. 
\end{lemma}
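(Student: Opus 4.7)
The plan is to prove the set equality $\pi^*(B') = B$ by exhibiting a natural bijection between $B'$ and $B$ and then exploiting a dimension count to avoid verifying both inclusions directly. The key observation is that since $\supp(B) \subseteq \Ess$, every monomial in $B$ already lies in the image of $\pi^*$, so it makes sense to pull $B$ back into $T$ and compare with $B'$ there.

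First I would verify that both $B$ and $B'$ have exactly $m$ elements. On the $S$-side this is immediate because $|B| = \dim_\Bbbk(S/I(P)) = m$. On the $T$-side, the hypothesis $\dim_\Bbbk(\Ess(P)) = |\Ess|$ implies that the restriction of $\pi$ to $P$ is injective (as argued before Lemma~\ref{lemma:iso}), so $\pi(P)$ consists of $m$ distinct points in $\Bbbk^{\overline{n}}$, and hence $|B'| = \dim_\Bbbk(T/J) = m$. Let $B'' = (\pi^*)^{-1}(B) \subseteq T$; since $\pi^*$ is injective and restricts to a bijection between monomials in $T$ and monomials with support in $\Ess$, we get $|B''| = |B| = m$.

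The heart of the argument is to show $B'' \subseteq B'$, i.e., that $y^\alpha \notin \ini_{\prec'}(J)$ whenever $\pi^*(y^\alpha) \notin \ini_\prec(I(P))$. I would argue the contrapositive. Suppose $y^\alpha \in \ini_{\prec'}(J)$, so there is some $g \in J$ with $\ini_{\prec'}(g) = y^\alpha$. By Lemma~\ref{lemma:iso}, $\pi^*(g) \in I(P)$. Because $\prec'$ was defined as the pullback of $\prec$ along $\pi^*$, the map $\pi^*$ is order-preserving on monomials, and in particular $\ini_\prec(\pi^*(g)) = \pi^*(\ini_{\prec'}(g)) = \pi^*(y^\alpha)$. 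Hence $\pi^*(y^\alpha) \in \ini_\prec(I(P))$, as required. Combining $B'' \subseteq B'$ with $|B''| = |B'| = m$ forces $B'' = B'$, and applying $\pi^*$ yields $\pi^*(B') = B$.

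The only step that requires any care is the compatibility of $\pi^*$ with initial terms: one must check that passing to leading monomials commutes with $\pi^*$. This follows immediately from the definition of $\prec'$ (ordering of monomials in $T$ is read off from the ordering of their images in $S$), so once this is made explicit the rest is essentially bookkeeping. The auxiliary fact that the fibre $\pi^{*-1}(I(P)) = J$ is what makes Lemma~\ref{lemma:iso} directly applicable and completes the argument.
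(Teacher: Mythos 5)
Your proof is correct, and it proves the opposite inclusion from the paper before invoking the same dimension count. The paper shows $\pi^*(B') \subseteq B$: given $b_i' \in B'$, suppose for contradiction $\pi^*(b_i') \in \ini_\prec(I(P))$; then reducing $\pi^*(b_i')$ modulo $I(P)$ against the basis $B$ gives $\pi^*(b_i') - \sum_j c_j b_j \in I(P)$ with every $b_j \prec \pi^*(b_i')$, and because $\supp(B) \subseteq \Ess$ this whole relation pulls back through $(\pi^*)^{-1}$ to exhibit $b_i' \in \ini_{\prec'}(J)$, a contradiction. You instead show the reverse inclusion $(\pi^*)^{-1}(B) \subseteq B'$ via the contrapositive, pushing forward: any $g \in J$ with $\ini_{\prec'}(g) = y^\alpha$ has $\pi^*(g) \in I(P)$ and $\ini_\prec(\pi^*(g)) = \pi^*(y^\alpha)$ since $\pi^*$ is order-preserving by construction of $\prec'$. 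Your direction is arguably the more direct one: it only needs the easy containment $\pi^*(J) \subseteq I(P)$ together with the monotonicity of $\pi^*$, and the support hypothesis $\supp(B) \subseteq \Ess$ is invoked only to make $(\pi^*)^{-1}(B)$ well-defined for the counting step. The paper's direction uses the support hypothesis more structurally, in pulling back the normal-form relation from $S$ to $T$. Both are sound; your argument is essentially the dual of the paper's, exchanging push-forward of leading terms for pull-back of normal forms.
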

\begin{proof}
Suppose that $\pi^*(b_i')$ can be written as a linear combination of elements in $B$; $\pi^*(b_i') = \sum_j c_j b_j$ with 
$\pi^*(b_i') \succ b_j$ if $c_j \neq 0$. Since $\supp(B) \subseteq \Ess$, we get $b_i' = \sum_j c_j (\pi^*)^{-1}(b_j)$ with 
$b_i' \succ' (\pi^*)^{-1}(b_j) $, which is a contradiction. Hence $\pi^*(B') \subseteq B$ from which it follows 
that $\pi^*(B') = B$ by Lemma \ref{lemma:iso} and a dimension argument.
\end{proof}

\begin{lemma} \label{lemma:isoGB}
In the context of Lemma \ref{lemma:suppB} and Lemma \ref{lemma:isoB}, suppose that $x_k - \sum_j c_{kj} x_{i_j} \in I(P)$ for all $x_k$ outside $\Ess$. Let $G'$ be a 
reduced Gr\"{o}bner basis for $I(\pi(P))$. Then  $$G = \pi^*(G') \sqcup \{x_k - \sum_{j} c_{kj} \pi^* (\nf((\pi^*)^{-1}(x_{i_j}),G'))\}$$ is a reduced Gr\"{o}bner basis for $I(P)$. 
\end{lemma}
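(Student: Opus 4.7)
The plan is to verify the three conditions that characterize $G$ as a reduced Gr\"{o}bner basis for $I(P)$ with respect to $\prec$: (a) $G \subseteq I(P)$; (b) the leading monomials of $G$ generate $\ini(I(P))$; (c) no tail monomial of any element of $G$ is divisible by the leading monomial of another, and all leading coefficients are $1$.

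For (a), for any $g' \in G'$ the identity $\pi^*(g')(p) = g'(\pi(p))$ shows $\pi^*(g') \in I(P)$, so $\pi^*(G') \subseteq I(P)$. For the second family of generators, Lemma \ref{lemma:suppB} supplies $x_k - \sum_j c_{kj} x_{i_j} \in I(P)$, and for each $j$ the element $(\pi^*)^{-1}(x_{i_j}) - \nf((\pi^*)^{-1}(x_{i_j}), G')$ lies in $I(\pi(P))$, so its $\pi^*$-image $x_{i_j} - \pi^*(\nf((\pi^*)^{-1}(x_{i_j}), G'))$ lies in $I(P)$. Combining these with coefficients $c_{kj}$ and adding to the first element produces the desired generator.

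For (b), the leading monomial of $\pi^*(g')$ is $\pi^*$ applied to the leading monomial of $g'$---a monomial supported in $\Ess$---while the leading monomial of each second-family generator is $x_k$: indeed $\nf((\pi^*)^{-1}(x_{i_j}), G')$ is a $\Bbbk$-combination of $B'$-monomials all $\preceq' y_{i_j}$, so their $\pi^*$-images are $\Ess$-monomials all $\preceq x_{i_j}$, and the last bullet of Lemma \ref{lemma:suppB} forces $x_{i_j} \prec x_k$ whenever $c_{kj} \neq 0$. Let $N$ be the monomial ideal generated by all these leading monomials. I claim the monomial complement of $N$ in $S$ equals $B$: a monomial lies outside $N$ iff it is supported in $\Ess$ (hence of the form $\pi^*(\mu')$ for a unique $\mu' \in T$) and $\mu'$ is not divisible by any element of $\ini(G')$, i.e.\ $\mu' \in B'$; by Lemma \ref{lemma:isoB} this set is exactly $\pi^*(B') = B$. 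Since Macaulay's theorem also identifies $B$ with the monomial complement of $\ini(I(P))$, one concludes $N = \ini(I(P))$.

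For (c), leading coefficients are $1$ by construction, and the tails of both families of generators are $\Bbbk$-combinations of $B$-monomials: for $\pi^*(g')$ this uses reducedness of $G'$, and for the second family it follows from the normal-form terms lying in $B'$. Since $B$ is disjoint from $N$ by (b), no tail monomial is divisible by a leading monomial of $G$, yielding reducedness. The main obstacle is the bookkeeping in step (b), where one must carefully match the monomial complement structures of $S$ and $T$ through $\pi^*$ and verify that the auxiliary sum in each second-family generator really contributes only monomials strictly $\prec x_k$; once this is established, (a) and (c) are essentially immediate from the construction.
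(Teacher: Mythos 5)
Your proof follows the same strategy as the paper's own: show that both families of generators lie in $I(P)$, identify the leading monomial of each generator, and show these leading monomials carve out exactly $\ini(I(P))$ via $\pi^*(B') = B$. Your step (a) is essentially the paper's algebraic reduction (the paper subtracts $x_k - \sum_j c_{kj}x_{i_j}$ and then pushes through $(\pi^*)^{-1}$ to land in $G'$; you instead add the $\pi^*$-images of $f - \nf(f,G')$, which is the same computation presented constructively). Your step (b) makes fully explicit what the paper simply asserts, namely that the monomial complement of the candidate initial ideal $N$ is $B$, and your step (c) is more thorough than the paper, which only checks that $x_k$ is the leading term of the second-family generators and never verifies that the tails of $\pi^*(G')$ lie in $B$.

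One small point remains unaddressed: reducedness of a Gr\"obner basis also requires that no \emph{leading} monomial of one element of $G$ be divisible by the leading monomial of another, i.e.\ that the leading monomials be a \emph{minimal} generating set for $\ini(I(P))$. Your condition (c) speaks only of \emph{tail} monomials, and your (b) establishes that $N = \ini(I(P))$ as ideals without addressing minimality of the listed generators. The missing check is routine: the $x_k$ are distinct variables outside $\Ess$, hence pairwise coprime and coprime to each $\pi^*(\ini(g'))$ (which is supported in $\Ess$ and is not $1$, since $I(\pi(P)) \neq T$); and the $\pi^*(\ini(g'))$ inherit pairwise non-divisibility from the reducedness of $G'$ because $\pi^*$ is a monomial monomorphism preserving divisibility. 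Worth noting that the paper itself skips this verification by simply asserting that $\ini(I(P))$ is minimally generated by $\Ess^c \sqcup \pi^*(\ini(I(\pi(P))))$.
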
 

\begin{proof}
Since $\pi^*(B')$ is the complement of $\ini(I(P))$ by Lemma \ref{lemma:isoB}, it follows that 
$\ini(I(P))$ is minimally generated by $\Ess^c \sqcup \pi^{*}(\ini(I(\pi(P))))$. Clearly 
$\pi^*(G')$ is contained in $G$. Thus, it is enough to prove 
that 
$$x_k - \sum_{j} c_{kj} \pi^* (\nf((\pi^*)^{-1}(x_{i_j}),G')) \in I(P)$$ 
and that 
$x_k$ is larger than any monomial occurring in the right hand sum. Since 
$x_k - \sum_{j} c_{kj} x_{i_j} \in I(P)$, we have that 
$$x_k - \sum_{j} c_{kj} (\pi^*) (\nf((\pi^*)^{-1}(x_{i_j}),G')) \in I(P)$$ is equivalent to 
$$\sum_{j} c_{kj} x_{i_j}  - \sum_{j} c_{kj} (\pi^*) (\nf((\pi^*)^{-1}(x_{i_j}),G')) \in I(P).$$ Using the monomorphism
$(\pi^*)^{-1}$ we see that this is equivalent to 
$$\sum_{j} c_{kj} ((\pi^*)^{-1} (x_{i_j}) - \nf((\pi^*)^{-1}(x_{i_j}),G') \in I(\pi(P)).$$
Since each term $(\pi^*)^{-1} (x_{i_j}) - \nf((\pi^*)^{-1}(x_{i_j}),G')$ is in $G'$, we are done with the first part. 
The second part follows since $x_k \succ x_{i_j}$ if $c_{kj} \neq 0$ and each $\pi^*(x_{i_j})$ is written as a linear combination of elements less than 
$\pi^*(x_{i_j})$ with respect to $\prec'$. 
\end{proof}

We now give an example of our method.

\begin{example}
Consider the points 
$$p_1 = (1,1,0,1,0), p_2 = (2,2,1,1,1), p_3 = (2,0,1,1,-1), p_4 = (5,3,4,1,2)$$ 
in $\mathbb{Q}^5$, with respect to pure lex and
$x_1 \succ x_2 \succ x_3 \succ x_4 \succ x_5$. Using Lemma \ref{lemma:suppB}, we get $\Ess = \{x_3,x_5\}$ and 
$$x_4 = 1, x_2 = x_5+1, x_1 = x_3+1,$$ 
everything mod $I(P)$. Thus, let $\pi(a_1, \ldots, a_5) = (a_3,a_5)$ and 
let $T = \Bbbk[y_1,y_2]$ and let $\pi^*$ be defined by $y_1 \mapsto x_3$ and $y_2 \mapsto x_5$. We have that $\prec'$ is lex with
$y_1 \succ' y_2$ and 
$$\pi(P)=\{(0,0), (1,1), (1,-1), (4,2)\}.$$
A call to the BM-algorithm with $\pi(P)$ and $\prec'$ yields $B' = \{1,y_2,y_2^2,y_2^3\}$ as the set of monomials outside $\ini(I(\pi(P)))$ and
$$y_2^4 +2y_2 - y_2^2-2y_2^3,  y_1 - y_2^2$$ 
as a Gr\"{o}bner basis $G'$ for $I(\pi(P))$. Thus, a Gr\"{o}bner basis $G$ for 
$I(P)$ is 
$$\{x_5^4  + 2x_5 - x_5^2-2x_5^3, x_3 - x_5^2,$$ 
$$x_4-\pi^*(\nf((\pi^*)^{-1}(1),G')),$$
$$x_2-\pi^*(\nf((\pi^*)^{-1}(x_5),G'))-\pi^*(\nf((\pi^*)^{-1}(1),G')),$$
$$x_1-\pi^*(\nf((\pi^*)^{-1}(x_3),G'))-\pi^*(\nf((\pi^*)^{-1}(1),G'))\}$$ 
and the complement of $\ini(I(P))$ is 
$$B = \{(\pi^*)^{-1}(1), (\pi^*)^{-1}(y_2), (\pi^*)^{-1}(y_2^2), (\pi^*)^{-1}(y_2^3)\} = \{1, x_5, x_5^2, x_5^3\}.$$ 
We have $$\pi^*(\nf((\pi^*)^{-1}(1),G')) = \pi^*(\nf(1,G')) = \pi^*(1) = 1,$$
$$\pi^*(\nf((\pi^*)^{-1}(x_5),G') = \pi^*(\nf(y_2,G')) = \pi^*(y_2)=x_5$$ and
$$\pi^*\nf((\pi^*)^{-1}(x_3),G')) = \pi^*(\nf(y_1,G')) = \pi^*(y_2^2) = x_5^2.$$ Thus 
$$G = \{x_5^4 +2x_5 - x_5^2-2x_5^3, x_3 - x_5^2, x_4-1,x_2-x_5-1,x_1-x_5^2-1\}.$$
\end{example}
Notice that although $\Ess = \{x_3,x_5\}$ were linearly independent with respect to $P$, it did not follow that $\Ess \subset B$.

To analyze the complexity of the method above, we first determine the cost of the monomial manipulations. 
\begin{proposition} \label{prop:timecplmonomialmanstandard}
Let $\prec$ be a standard order. An upper bound for the time complexity of the monomial manipulation part of the BM-algorithm using the projection technique is 
$$O(\min(m,n)m^2 \log(m))$$ 
\end{proposition}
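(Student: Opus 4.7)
The plan is to exploit the projection technique from Lemma~\ref{lemma:suppB} to cut the effective number of variables from $n$ down to $\overline{n} = |\Ess| \leq \min(m-1, n)$, and then to apply the merge algorithm of Theorem~\ref{thm:merge} inside the main loop while carefully tracking the cost of every monomial manipulation. After projection the remaining BM computation runs in $T = \Bbbk[y_{i_1}, \ldots, y_{i_{\overline{n}}}]$ equipped with the restricted order $\prec'$, which by Lemma~\ref{lemma:restrictstandard} is obtained in $O(1)$ time for a standard order. Every monomial that ever appears has degree at most $m$, since $|B'| = m$ forces $\deg b' \leq m-1$ for $b' \in B'$, and $L$ contains only single-variable multiples of previously processed monomials.

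The key global bounds are that step C5' fires exactly $m$ times, inserting at most $\overline{n}$ new monomials each time, so the number of elements ever placed into $L$ is at most $1 + m\overline{n}$; in particular $|L| = O(m\overline{n})$ at every instant and the total number of C2'-activations is $O(m\overline{n})$ as well. With $L$ maintained as a sorted list carrying its $\Delta$-sequence, extracting $\min_{\prec'}(L)$ costs $O(1)$, while the ``multiple of $\ini(G)$'' test is replaced by the support-counting test $|\supp(t)| > \Occ(t)$ established after step C5', at cost $O(\overline{n})$ per C2'-activation (equalities between successive elements of $L$ can be read off directly from the stored $\Delta$-values).

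Inside each C5'-iteration I would first compute the $\overline{n}$ order vectors $\ov(y_{i_j} t)$ from $\ov(t)$ using Lemma~\ref{lemma:standardupdatingop}, at cost $O(\overline{n}\log m)$. Because the candidates $y_{i_1} t, \ldots, y_{i_{\overline{n}}} t$ all share the same degree and differ only in which single variable is incremented, their $\prec'$-ordering is forced by the preassigned ordering of the variables: for each of the three standard orders the list comes already sorted, so no comparisons are needed at this step. Merging this sorted length-$\overline{n}$ list into $L$ via Theorem~\ref{thm:merge} then costs
\[
O\bigl(\overline{n}^2 \max(\log(m/\overline{n}), 1) + |L| \log(\overline{n}+m)\bigr) = O(\overline{n}^2 \log m + |L|\log m).
\]

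Summing the contributions: the support checks yield $O(m\overline{n}) \cdot O(\overline{n}) = O(m\overline{n}^2)$; the order-vector updates yield $m \cdot O(\overline{n}\log m) = O(m\overline{n}\log m)$; and the $m$ merges yield $O(m\overline{n}^2\log m) + O\bigl(\log m \cdot \textstyle\sum|L|\bigr) = O(m\overline{n}^2\log m + m^2\overline{n}\log m) = O(m^2\overline{n}\log m)$, using $\overline{n}\leq m$ together with the uniform bound $|L| = O(m\overline{n})$. The grand total is therefore $O(\min(m,n)\, m^2 \log m)$. The main obstacle is bounding $\sum|L|$, which is resolved by observing that the lifetime of any monomial in $L$ is dominated by the $O(m\overline{n})$ iteration count; together with the observation that, for a standard order, the $\overline{n}$ new candidates come already sorted so that no extra $\log\overline{n}$ factor leaks into the merges, these are exactly the two ingredients that make everything fit within the stated bound.
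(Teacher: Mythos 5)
Your proposal is correct and follows essentially the same route as the paper's proof: project down to $\overline{n}\leq\min(m-1,n)$ variables via Lemma~\ref{lemma:suppB}, bound $|L|$ by $O(m\overline{n})$, and charge the cost as $m$ applications of Theorem~\ref{thm:merge} (a length-$\overline{n}$ list into $L$) plus the $O(\overline{n}\log m)$ creation of order vectors per $B$-insertion via Lemma~\ref{lemma:standardupdatingop}, yielding $O(\min(m,n)m^2\log m)$. The only difference is that you make explicit a couple of points the paper leaves implicit (the new candidates $y_{i_1}t,\ldots,y_{i_{\overline{n}}}t$ arrive pre-sorted by admissibility, and the $O(m\overline{n}^2)$ support-check cost is dominated), which is a fair tightening but not a different argument.
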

\begin{proof}
Suppose that $m<n$. We can now use the projection technique described above so that 
$\overline{n}\leq m-1$. Since the projection technique only affects the arithmetic complexity, we do not need to consider the cost for it in this analysis. 
By Lemma \ref{lemma:restrictstandard}, to determine $\prec_{\Ess}$ is $O(1)$. 
Everytime we insert an element into $B$, we insert at most $\overline{n}$ elements into $L$. Thus, the number of elements in $L$ is bounded by
$\overline{n}m$. Thus, the complexity of the monomial manipulation part is dominated by 
merging a sorted list of $\overline{n}$ monomials with a sorted list of at most $\overline{n}m$ monomials, repeated $m$ times.
To compute $\ov(x_i x^{\alpha})$ given $\ov(x^{\alpha})$ is $O(\log(m))$ 
by Lemma \ref{lemma:standardupdatingop}.  Thus, each time an element is inserted into $B$, it is an
$O(\overline{n}\log(m))$-operation to create the list of monomials which we will merge with $L$. 
Since we create at most $m$ such lists, the total time needed for creation is $O(\overline{n}m\log(m))$. Using 
Theorem \ref{thm:merge} we see that each merge has time complexity
$O(\overline{n}^2 \max(\log(m/\overline{n}),1) + \overline{n}m \log(\overline{n} + m)).$ 

If $m \geq n$, then all arguments hold if we replace $\overline{n}$ by $n$. Thus, in general, each merge has time complexity
$$O(\min(m,n)^2 \max(\log(m/\min(m,n)),1) + \min(m,n)m \log(\min(m,n) + m))$$
which equals $$O(\min(m,n) m \log(m))$$ by a straightforward calculation.
Since there are exactly $m-1$ merges, we get the complexity
$$O(\min(m,n)m^2\log(m) + \min(m,n)m\log(m)),$$
where the last term comes from the creation process and is negligible.
\end{proof}

\begin{proposition} \label{prop:timecplmonmialmanmatrix}
Let $\prec$ be an order defined by an integer matrix $A$. Let $c = \max(|a_{ij}|)$. We give two upper bounds for the time 
 complexity of the monomial manipulation part of the BM-algorithm using the projection technique, based on two different methods. 
When $m \geq n$, the methods agree and an upper bound is 
$$O(n^2 m \log(cm) + n m^2 \log(n+m) + n^2m\log(cm) +  n^2\log(c)\log(n)).$$

When $m<n$, the first method has the bound
$$O(m^3 \log(cm) + n^2\log(c)\log(n))$$ to which one needs to add the cost for 
$O(nm^2)$ arithmetic operations over $\mathbb{Q}$. 

When $m<n$, the second method has the bound
$$O(nm^2 \log(cm) + n^2\log(c)\log(n)).$$

\end{proposition}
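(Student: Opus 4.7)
The plan is to mirror the structure of the proof of Proposition \ref{prop:timecplmonomialmanstandard}, replacing the cheap standard-order primitives by the corresponding matrix-order primitives: computing $\ov_A(x^\alpha)$ from scratch by Lemma \ref{lemma:computeovmatrix}, updating $\ov_A(x_i x^\alpha)$ from $\ov_A(x^\alpha)$ by Lemma \ref{lemma:computeovgivenovmatrix}, and comparing two given order vectors by Lemma \ref{lemma:cmpgivenovmatrix}. Three global costs must be added that were free in the standard case: the determination of the variable order via $\Varord_\prec(n)=O(n^2\log(c)\log(n))$ from Lemma \ref{lemma:varordmatrix}; in method~1 the restriction of $A$ to $\Ess$, which by Lemma \ref{lemma:restrictmatrix} costs $O(n\overline{n}^2)=O(nm^2)$ arithmetic operations over $\mathbb{Q}$; and, in both methods, the fact that every entry appearing in an order vector during the run is bounded by $cm$, so each $\Sigma$-comparison (or integer addition) used inside the merge costs $O(\log(cm))$ rather than $O(1)$.

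First I would handle the case $m\geq n$. Here there is no projection step: the BM-algorithm performs at most $m-1$ merges, each merging the list of the $n$ order vectors $\ov_A(x_1 t),\ldots,\ov_A(x_n t)$ (constructed from $\ov_A(t)$ in time $O(n^2\log(cm))$ by Lemma \ref{lemma:computeovgivenovmatrix}) with the current list $L$, which always has size at most $nm$. By Theorem \ref{thm:merge} each such merge uses $O(sn+t)=O(n^2+nm)$ $\Sigma$-comparisons plus $O(t\log n)=O(nm\log n)$ index-bookkeeping, where each $\Sigma$-comparison now costs $O(\log(cm))$. Summing the creation cost over the $m$ insertions and the merge cost over the $m-1$ merges, and adding the $\Varord$ cost, gives exactly the bound stated for $m\geq n$.

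Next I would treat $m<n$ by the first method. Lemma \ref{lemma:suppB} produces the subset $\Ess$ with $\overline{n}\leq m-1$, and Lemma \ref{lemma:restrictmatrix} supplies the $\overline{n}\times\overline{n}$ matrix $A_\Ess$ representing $\prec_\Ess$, collecting the arithmetic charge of $O(nm^2)$ operations over $\mathbb{Q}$ mentioned separately in the statement. After projection, the BM-algorithm runs on $\overline{n}\leq m$ variables with order vectors of length $\overline{n}$, so a merge of the $\overline{n}$ new order vectors with $L$ (now of size at most $\overline{n}m\leq m^2$) costs $O((\overline{n}^2+\overline{n}m)\log(cm)+\overline{n}m\log\overline{n})=O(m^2\log(cm))$ per merge by Theorem \ref{thm:merge} and Lemma \ref{lemma:cmpgivenovmatrix}, and the creation of the $\overline{n}$ new order vectors per insertion costs $O(\overline{n}^2\log(cm))=O(m^2\log(cm))$. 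Summing over the $m$ insertions and $m-1$ merges and adding $\Varord_\prec(n)=O(n^2\log(c)\log(n))$ yields the first stated bound. For the second method, I would not pass to $A_\Ess$ but instead keep the $n\times n$ matrix $A$ and work with order vectors of length $n$; the monomials still live inside the monoid generated by $\Ess$, so the number of elements in $L$ is again bounded by $\overline{n}m\leq m^2$, and the $s=\overline{n}$ new order vectors per insertion are produced in time $O(\overline{n}n\log(cm))=O(mn\log(cm))$ using Lemma \ref{lemma:computeovgivenovmatrix}. Each merge contributes $O((\overline{n}n+\overline{n}m)\log(cm)+\overline{n}m\log n)=O(mn\log(cm))$ by Theorem \ref{thm:merge} (here $n$ dominates $m$), and summing over the $m$ insertions together with the $\Varord$ cost yields the claimed bound.

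The main obstacle is the bookkeeping of which parameter ($n$, $\overline{n}$, $m$, or $cm$) enters the size of $\Sigma$, the length of the vectors, and the length of the lists at each stage, since Theorem \ref{thm:merge} charges them differently through the terms $sn$, $t$, $\log(n)$ and the $\Sigma$-comparison cost. A secondary delicate point for method~1 is that, even though row-reduction to obtain $A_\Ess$ can in principle enlarge entries, it is enough for the bound to observe that the order vectors produced during the algorithm stay of absolute value at most $c m$ times a quantity absorbed in the $O$-notation, so that Lemmas \ref{lemma:computeovgivenovmatrix} and \ref{lemma:cmpgivenovmatrix} can still be applied with the factor $\log(cm)$ used in the statement.
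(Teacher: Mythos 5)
Your proposal follows the same decomposition as the paper's proof: determine $\Varord_\prec$ first, then for $m<n$ branch into the two projection variants (restrict $A$ to $A_\Ess$ via Lemma \ref{lemma:restrictmatrix} vs.\ keep the $n\times\overline{n}$ submatrix), and in each case bound the $m-1$ merges using Theorem \ref{thm:merge} with the matrix-order cost lemmas; this is exactly what the paper does. One place where your bookkeeping does not quite reproduce the stated bound is the $m\geq n$ case: you charge every $\Sigma$-comparison $O(\log(cm))$, giving a per-merge cost of $O((sn+t)\log(cm)+t\log n)=O(n^2\log(cm)+nm\log(cm)+nm\log n)$, whereas the paper carries over the refined form of Theorem \ref{thm:merge} --- $O(sn\cdot(\text{vector cost})+t\log(n+m))$ --- obtaining $O(n^2\log(cm)+nm\log(n+m))$ per merge, which is what sums to the stated $nm^2\log(n+m)$ term. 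Your per-merge figure dominates the paper's (the extra $t\log(cm)$ replaces $t\log(n+m)$), so your derivation does not, as written, give "exactly the bound stated"; to recover the $\log(n+m)$ factor you would need the paper's observation that the $sn$ "equal-entry" comparisons are collectively bounded by the vector bit count, leaving only $t$ comparisons that pay the full $\log(\cdot)$ cost (and those are mostly $\Delta$-comparisons of integers at most $n$, plus one full entry comparison per stage). You are right to flag that row-reduction can enlarge the entries of $A_\Ess$ beyond $c$, and your remark that this is absorbed in the $O$-notation is not a rigorous argument; however, this is a gap in the paper's own proof (Lemma \ref{lemma:computeovgivenovmatrix} is applied with the original $c$), so it does not distinguish your route from theirs.
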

\begin{proof}
First of all we need to determine 
$\Varord_{\prec}$, an $n^2\log(c)\log(n)$-operation by Lemma \ref{lemma:varordmatrix}.
Suppose that $m<n$. We can use the projection technique described above and we now have two choices. 
Either we use Lemma \ref{lemma:restrictmatrix} to construct an $\overline{n} \times \overline{n}$-matrix $A_{\Ess}$ using $O(nm^2)$ 
arithmetic operations over $\mathbb{Q}$, or we can use the $n \times \overline{n}$-submatrix of $A$, where we keep the columns that refers to the variables
in $\Ess$. 

In the first case, the cost for computing $\ov(x_im)$ given $\ov(m)$ is $O(\overline{n}\log(cm))$ by Lemma \ref{lemma:computeovgivenovmatrix}, 
so the total time needed for the construction of the associated order vectors is $O(\overline{n}^2m\log(cm))$.
By Lemma \ref{lemma:cmpgivenovmatrix} and Theorem \ref{thm:merge} we see that each merge has time complexity
$$O(\overline{n}^2 \log(cm) + \overline{n} m\log(\overline{n}+m)) = O(m^2 \log(cm))$$ as we merge a list of $\overline{n}$ elements with at most 
$\overline{n}m$ elements. Since we make $m-1$ merges, we deduce that the overall time complexity of the first method is
$$O(m^3 \log(cm) + \overline{n}^2m\log(cm) + n^2\log(c)\log(n))$$
$$  = O(m^3 \log(cm) + n^2\log(c)\log(n))$$ to which we need to add 
$O(nm^2)$ arithmetic operations over $\mathbb{Q}$. 

The second method differs from the first in that the vectors are $n$-tuples rather than $\overline{n}$-tuples. Thus, 
computing $\ov(x_im)$ given $\ov(m)$ is an $n \log(cm)$ operation, so the total time needed for the construction process is $O(n\overline{n}m\log(cm))$. Each merge 
requires time $O(n\overline{n} \log(cm) + \overline{n} m\log(\overline{n}+m)) = O(nm \log(cm))$, so the overall time complexity of the second method becomes
$$O(nm^2 \log(cm) + n\overline{n}m\log(cm) + n^2\log(c)\log(n))$$
$$  = O(nm^2 \log(cm) + n^2\log(c)\log(n)).$$

When $m \geq n$, we do not need to project and the two methods agree. The cost for the construction of the associated order vectors becomes 
$O(n^2m\log(cm))$, each merge is $n^2 \log(cm) + n m \log(n+m)$ and thus an upper bound is
$$O(n^2 m \log(cm) + n m^2 \log(n+m) + n^2m\log(cm) +  n^2\log(c)\log(n)).$$ 
\end{proof}
We are ready to state the main theorem.

\begin{theorem} \label{thm:main}
An upper bound for the arithmetic complexity of the BM-algorithm using steps C1, C2', C3, C4 and C5' and the projection technique based on Lemma \ref{lemma:suppB} 
is $$O(nm^2 + \min(m,n)m^3).$$ 
To this we need to add the time complexity
$$O(\min(m,n) m^2 \log(m))$$ 
when $\prec$ is standard. 

When $\prec$ is given by a matrix there are two methods to use. When $m \geq n$, the methods agree and an upper bound is 
$$O(n^2 m \log(cm) + n m^2 \log(n+m) + n^2m\log(cm) +  n^2\log(c)\log(n)).$$

When $m<n$, the first method has the bound
$$O(m^3 \log(cm) + n^2\log(c)\log(n))$$ to which one needs to add the cost for 
$O(nm^2)$ arithmetic operations over $\mathbb{Q}$. 

When $m<n$, the second method has the bound
$$O(nm^2 \log(cm) + n^2\log(c)\log(n)).$$
\end{theorem}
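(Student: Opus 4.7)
The plan is to assemble the theorem as a composite of the projection technique developed in Lemma \ref{lemma:suppB} together with the complexity bounds of Propositions \ref{prop:complexityarithmeticBM}, \ref{prop:timecplmonomialmanstandard} and \ref{prop:timecplmonmialmanmatrix}. Concretely, given input $P$ and $\prec$, I first run the procedure of Lemma \ref{lemma:suppB} to obtain $\Ess$ (with $|\Ess|=\overline{n}\le\min(m-1,n)$) together with the linear relations $x_k-\sum_j c_{kj}x_{i_j}\in I(P)$ for every $x_k\notin\Ess$. Then, using Lemma \ref{lemma:iso} to transfer the problem to the smaller ring $T=\Bbbk[y_{i_1},\ldots,y_{i_{\overline{n}}}]$ and the projected points $\pi(P)$, I run the BM-algorithm in the form C1, C2', C3, C4, C5' on $(\pi(P),\prec')$ to produce $G'$ and $B'$, and finally I lift via Lemma \ref{lemma:isoB} and Lemma \ref{lemma:isoGB} to obtain the Gröbner basis $G$ and basis $B$ for $S/I(P)$.

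For the arithmetic count I would argue as follows. Lemma \ref{lemma:suppB} costs $O(nm^2)$ arithmetic operations. The internal BM-run is on $\overline{n}$ variables and $m$ points, so Proposition \ref{prop:complexityarithmeticBM} bounds its arithmetic cost by $O(\overline{n}m^2+\min(m,\overline{n})m^3)$, and since $\overline{n}\le\min(m-1,n)$ this is absorbed into $O(\min(m,n)m^3)$. The lifting step of Lemma \ref{lemma:isoGB} requires, for each $x_k\notin\Ess$, at most $\overline{n}$ normal-form substitutions $\pi^{*}(\nf((\pi^{*})^{-1}(x_{i_j}),G'))$; each such normal form is already available as a length-$m$ vector from the matrix $C$ produced during the BM-run, so the total lift-cost is $O(n\overline{n}m)=O(nm^2)$. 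Summing gives the stated $O(nm^2+\min(m,n)m^3)$.

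For the monomial-manipulation time complexity, everything is already packaged: after projecting we are running BM on $\overline{n}=\min(m-1,n)$ variables (or $n$ variables if $m\ge n$, in which case no projection happens), so the standard-order bound $O(\min(m,n)m^2\log(m))$ follows verbatim from Proposition \ref{prop:timecplmonomialmanstandard}, and the three matrix-order bounds are exactly those of Proposition \ref{prop:timecplmonmialmanmatrix}. The only point requiring care is ensuring that Propositions \ref{prop:timecplmonomialmanstandard} and \ref{prop:timecplmonmialmanmatrix} are applicable, i.e.\ that the projected BM-run on $(\pi(P),\prec')$ satisfies their hypotheses; this is immediate from Lemma \ref{lemma:iso}, which tells us $\pi(P)$ consists of $m$ distinct points and $\prec'$ is again an admissible order of the same type as $\prec$.

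The main obstacle I anticipate is the bookkeeping for the lift-cost bound. One has to check that the normal forms $\nf((\pi^{*})^{-1}(x_{i_j}),G')$ required in Lemma \ref{lemma:isoGB} are in fact obtainable within the claimed $O(nm^2)$ budget rather than requiring a fresh reduction pass; here the key observation is that during the BM-run on $(\pi(P),\prec')$ every variable $y_{i_j}$ was itself inserted into $L$ at some stage and subsequently either added to $B'$ (in which case its normal form is itself) or caused a polynomial $y_{i_j}-\sum a_i r_i'$ to be appended to $G'$ in step C4 (in which case its normal form is $\sum a_i r_i'$, already stored). Thus no extra reductions are required, and the assembly of the three bounds into a single statement is then purely bookkeeping.
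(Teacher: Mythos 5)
Your overall decomposition — run Lemma \ref{lemma:suppB}, run the BM-algorithm on the projected points using Proposition \ref{prop:complexityarithmeticBM}, then lift via Lemmas \ref{lemma:isoB} and \ref{lemma:isoGB}, and package the monomial-manipulation bounds via Propositions \ref{prop:timecplmonomialmanstandard} and \ref{prop:timecplmonmialmanmatrix} — is exactly the structure of the paper's proof, and your count of each piece is correct. The one place where you diverge is the lift-cost argument, and it is a genuine alternative route.

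The paper deliberately chooses \emph{not} to evaluate the expression $x_k - \sum_j c_{kj}\pi^*(\nf((\pi^*)^{-1}(x_{i_j}),G'))$ by retrieving stored normal forms; it explicitly says it will not use the relation $x_k - \sum_j c_{kj}x_{i_j}\in I(P)$ at all. Instead it observes that the evaluation vector $(p_{1k},\ldots,p_{mk})$ is directly available from the input, and that writing it in the basis $B$ is a single $O(m^2)$ row-reduction per $x_k$; since $B$ is a basis, the resulting linear combination must coincide with the lift formula of Lemma \ref{lemma:isoGB}, so one never needs to touch $G'$ or the $c_{kj}$. Your route instead tracks that each variable $y_{i_j}$ is processed by the BM-run and that its normal form therefore comes for free (either as $y_{i_j}$ itself or as the $\sum a_ir_i'$ produced in step C4), then combines these $\overline{n}$ vectors with weights $c_{kj}$ at cost $O(\overline{n}m)$ per $x_k$. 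Both yield $O(nm^2)$, so both are correct. The trade-off: your argument relies on the implementation retaining the normal-form vectors computed in the BM-run (and on the observation that variables are never skipped in step C2', since a variable cannot be a proper multiple of anything in $\ini(G')$ unless $1\in\ini(G')$), whereas the paper's argument is implementation-independent — a fresh row reduction against $C$ — at the price of invoking the basis property of $B$ to identify the result with the abstract lift formula. Your bound $O(n\overline{n}m)$ is in fact slightly tighter, though both are absorbed into the stated $O(nm^2)$.
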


\begin{proof}
The complexity result for the monomial manipulation part is given in Proposition \ref{prop:complexityarithmeticBM}. If $m<n$ and we use the projection technique
described in Lemma \ref{lemma:suppB}, we will get the set $B'$ of monomials outside $\ini(I(\pi(P)))$ and a Gr\"{o}bner basis $G'$ for 
$\ini(I(\pi(P)))$. By Lemma \ref{lemma:isoB}, $B = \pi^*(B')$ and by Lemma \ref{lemma:isoGB}, 
$$G = \pi^*(G') \sqcup \{x_k - \sum_{j} c_{kj} \pi^* (\nf((\pi^*)^{-1}(x_{i_j}),G'))\}.$$
The computation of $\pi^*(B)$ is immediate, since the only computation needed is changing indices. 
The arithmetic complexity bound follows if we can show that each $x_k - \sum_{j} c_{kj} \pi^* (\nf((\pi^*)^{-1}(x_{i_j}),G'))$
is computable within $O(m^2)$ arithmetic operations. To get a short proof, we will not use the information
$x_k -  \sum_{j} c_{kj} x_{i_j} \in I$. Instead we compute the evaluation vector 
$(x_k(p_1), \ldots, x_k(p_m))$ $= (p_{1k}, \ldots, p_{mk})$ and write it as a linear combination of the elements in $B$, an operation which requires 
$O(m^2)$ arithmetic operations. Since $B$ is a basis, 
the linear combination will then equal $\sum_{j} c_{kj} \pi^* (\nf((\pi^*)^{-1}(x_{i_j}),G'))$.

The complexity for the monomial manipulation part follows from Proposition \ref{prop:timecplmonomialmanstandard} and Proposition
\ref{prop:timecplmonmialmanmatrix}.

\end{proof}

The following corollary states that our version of the BM-algorithm is prefarable to the EssGB-algorithm \cite{JustStigler07}.

\begin{corollary}
When $m<n$ and the order is standard, the BM-algorithm using steps C1, C2', C3, C4 and C5' and the projection technique based on Lemma \ref{lemma:suppB} 
has arithmetic complexity 
$$O(nm^2 + m^4).$$ 
To this we need to add the time complexity
$$O(m^3\log(m)).$$
\end{corollary}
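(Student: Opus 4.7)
The plan is to derive the corollary as a direct specialization of Theorem \ref{thm:main} under the hypothesis $m<n$, with $\prec$ standard. No new combinatorial argument is required; the work is purely bookkeeping on the bounds already established.

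First I would invoke Theorem \ref{thm:main} to read off the arithmetic complexity
\[
O\bigl(nm^2 + \min(m,n)\, m^3\bigr).
\]
Since $m<n$, we have $\min(m,n)=m$, so the second term becomes $m\cdot m^3 = m^4$, and the overall bound collapses to $O(nm^2 + m^4)$, as claimed. The only thing to check here is that the projection step of Lemma \ref{lemma:suppB}, which contributes $O(nm^2)$ arithmetic operations, is already absorbed into the first summand; this is immediate.

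Next I would handle the time complexity for the monomial manipulations. Theorem \ref{thm:main} (specialized to a standard order) gives the bound $O(\min(m,n)\, m^2 \log(m))$. Substituting $\min(m,n)=m$ again produces $O(m^3 \log(m))$. Here I would also remark that for a standard order Lemma \ref{lemma:restrictstandard} shows $\Varord_{\prec}(n)$ is $O(1)$, so the additional $n^2\log(c)\log(n)$-type terms that appear in the matrix-order case of the theorem do not show up here and no further term needs to be added.

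The main potential pitfall is to make sure that none of the hidden additive costs in Theorem \ref{thm:main}'s proof (e.g.\ the reconstruction of $G$ from $G'$ via the expressions $x_k - \sum_j c_{kj}\pi^*(\nf((\pi^*)^{-1}(x_{i_j}),G'))$) exceed the stated bounds. The theorem already argues that each such reconstruction is $O(m^2)$ arithmetic operations and there are at most $n - \overline{n}\le n$ of them, giving at most $O(nm^2)$ extra arithmetic operations, which is again subsumed. Thus both bounds claimed in the corollary follow without further computation.
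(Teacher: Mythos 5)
Your proposal is correct and is exactly the intended derivation: the paper states the corollary without proof, and the natural route is to substitute $\min(m,n)=m$ into both bounds of Theorem \ref{thm:main}, yielding $O(nm^2+m^4)$ for the arithmetic part and $O(m^3\log m)$ for the monomial manipulation part. Your additional sanity checks (absorption of the $O(nm^2)$ projection cost, $\Varord_\prec(n)=O(1)$ for standard orders, and the $O(nm^2)$ cost of reconstructing $G$) are accurate but already subsumed in Theorem \ref{thm:main}, so they are redundant rather than needed.
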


%

\subsection{Applications to the FGLM-algorithm and for ideals defined by functionals} \label{subsec:FGLM}

As was noticed in \cite{MMM}, both the FGLM- and the BM-algorithm are instances of definitions of an ideal defined by means of a finite set of functionals 
$L_i : \Bbbk[x_1, \ldots, x_n] \to \Bbbk$, such that $I$ is in the kernel of $\Psi: \Bbbk[x_1, \ldots, x_n] \to \Bbbk^m, \Psi(f) = L_1(f), \ldots, L_m(f)$. 
For the BM-setting, the functionals are defined by $L_i(f) = f(p_i)$ and in the FGLM-setting, the functionals are defined by 
$\nf(f,G_1) = \sum L_i(f)e_i$. In \cite{MMM}, a list of different problems that can be seen as instances of an ideal defined by functionals is given.

If we use the steps C1,C2',F3,C4,C5' of the BM-algorithm where F3 is defined below, we obtain 
Algorithm 1 in \cite{MMM}.

\hspace{1cm}

\textbf{F3} Compute $\Psi(t)=(b_1, \ldots, b_m)$  and reduce it against the rows of $C$ to obtain
$$(v_1, \ldots, v_m) = (b_1, \ldots, b_m) - \sum_i a_i (c_{i1}, \ldots, c_{im}) \hspace{1.5cm} a_i \in \Bbbk.$$

\hspace{1cm}

It is reported in \cite{MMM} Theorem 5.1, that Algorithm 1 in \cite{MMM} needs $O(nm^3+fnm^2)$ arithmetic operations, where $f$ denotes the cost of evaluating a 
functional. However, as for the BM-algorithm, one can replace the term $nm^3$ by $m^4 + nm^2$. Since we have shown (Proposition \ref{prop:complexityarithmeticBM}) 
that the number of calls to $C3$ equals $|G| + m = n + \min(m,n)m + m$, one can replace the term $fnm^2$ by $fnm + f\min(m,n)m^2 + fm$. Thus, Algorithm 1 in \cite{MMM} uses
$O(\min(m,n)m^3 + nm^2 + fnm+f\min(m,n)m^2)$ arithmetic operations. 
In the BM- or the FGLM-situation, it is shown in \cite{MMM} that $f=1$, by a recursive argument. 

To the arithmetic complexity one needs to add the cost for the monomial manipulations, which is the same as for the BM-algorithm, since it is clear
that we can use the projection technique described in Lemma \ref{lemma:suppB} if we replace 
$$x_{i_{n-j}}(P) \notin \spann_{\Bbbk}\{1(P),E_{j-1}(P) \}$$ by
$$\Psi(x_{i_{n-j}}) \notin \spann_{\Bbbk} \{\Psi(1), \Psi(x_{i_{n-j+1}}), \ldots, \Psi(x_{i_{n}})\}.$$

It follows that Theorem \ref{thm:main} can be lifted to the general setting of ideals defined by functionals.
We state

\begin{theorem}
An upper bound for the arithmetic complexity of Algorithm 1 in \cite{MMM} using the projection technique is 
$$O(\min(m,n)m^3 + nm^2 + fnm+f\min(m,n)m^2).$$ 

To this we need to add the time complexity
$$O(\min(m,n) m^2 \log(m))$$
when $\prec$ is standard.

When $\prec$ is given by a matrix there are two methods to use. When $m \geq n$, the methods agree and an upper bound is 
$$O(n^2 m \log(cm) + n m^2 \log(n+m) + n^2m\log(cm) +  n^2\log(c)\log(n)).$$

When $m<n$, the first method has the bound
$$O(m^3 \log(cm) + n^2\log(c)\log(n))$$ to which one needs to add the cost for 
$O(nm^2)$ arithmetic operations over $\mathbb{Q}$. 

When $m<n$, the second method has the bound
$$O(nm^2 \log(cm) + n^2\log(c)\log(n)).$$

\end{theorem}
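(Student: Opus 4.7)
The plan is to mimic the proof of Theorem~\ref{thm:main} almost verbatim, since Algorithm~1 of \cite{MMM} differs from the BM-algorithm only in that step C3 is replaced by F3. Accordingly, the monomial-manipulation part of the algorithm is unchanged and the bounds from Proposition~\ref{prop:timecplmonomialmanstandard} and Proposition~\ref{prop:timecplmonmialmanmatrix} can be quoted directly, provided we show that the projection technique of Lemma~\ref{lemma:suppB} still applies. The whole work, then, is to (a) re-count the arithmetic cost with F3 in place of C3 and (b) verify that Lemmas~\ref{lemma:suppB}, \ref{lemma:iso}, \ref{lemma:isoB}, \ref{lemma:isoGB} generalise to ideals defined by functionals.

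For (a), I would first recall from the proof of Proposition~\ref{prop:complexityarithmeticBM} that the number of calls to the ``C3-slot'' during the algorithm is $m+|G|$, and that $|G|=O(n+\min(m,n)m)$. Each invocation of F3 consists of two pieces: evaluating $\Psi(t)=(L_1(t),\ldots,L_m(t))$, which costs $fm$ arithmetic operations by definition of $f$, and then reducing the result against the at most $m$ rows of $C$, which costs $O(m^2)$. Multiplying the per-call cost $O(fm+m^2)$ by the number of calls gives
$$(m+|G|)\cdot O(fm+m^2)=O\bigl(\min(m,n)m^3+nm^2+f\min(m,n)m^2+fnm\bigr),$$
which is exactly the arithmetic bound claimed. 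The cost of step C4 (expressing $\sum a_ir_i$ in the basis, $O(m^2)$ per call, at most $m$ calls) is $O(m^3)$ and is absorbed.

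For (b), the projection lemma needs only a cosmetic adjustment: replace the test $x_{i_{n-j}}(P)\notin \spann_{\Bbbk}\{1(P),E_{j-1}(P)\}$ by $\Psi(x_{i_{n-j}})\notin \spann_{\Bbbk}\{\Psi(1),\Psi(x_{i_{n-j+1}}),\ldots,\Psi(x_{i_n})\}$. Each such test computes one $\Psi(x_{i_{n-j}})$ at cost $fm$ and performs an $O(m^2)$ linear-independence check inside a maintained matrix representation, giving $O(fnm+nm^2)$ in total, which is already within the arithmetic bound. The output $\Ess$ then satisfies $\supp(B)\subseteq\Ess$, $|\Ess|\le\min(m-1,n)$, and $\dim_{\Bbbk}\Psi(\Ess)=|\Ess|$, and each $x_k\notin\Ess$ gets a relation $x_k-\sum_j c_{kj}x_{i_j}\in\ker\Psi$ as a side effect. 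The main technical obstacle is re-deriving the isomorphism statements that underlie Lemma~\ref{lemma:isoB} and Lemma~\ref{lemma:isoGB}: one must verify that pulling $\Psi$ back along the monomorphism $\pi^*\colon T\hookrightarrow S$ produces a functional system on $T$ whose quotient is isomorphic, as a $\Bbbk$-algebra, to $S/\ker\Psi$. The argument, however, is purely dimensional---the full-rank condition $\dim_{\Bbbk}\Psi(\Ess)=|\Ess|$ forces $\dim_{\Bbbk}T/J=\dim_{\Bbbk}S/\ker\Psi$---so the proofs of the three isomorphism lemmas transfer line-by-line once $\Psi(\cdot)$ is used in place of $\cdot(P)$.

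Finally, because the set of monomials processed, the list $L$, and all the merge/order-vector operations are exactly as in the BM-algorithm once we restrict to the projected ring $T$ on $\overline{n}=|\Ess|\le\min(m-1,n)$ variables, Propositions~\ref{prop:timecplmonomialmanstandard} and~\ref{prop:timecplmonmialmanmatrix} yield the stated time bounds verbatim in the standard and matrix cases respectively. Adding the arithmetic and time complexities gives the theorem. The only place where I anticipate having to be careful is in spelling out the functional analogue of Lemma~\ref{lemma:iso}, since the distinct-points hypothesis of the BM setting must be replaced by the combinatorial statement that the projected functional map remains surjective onto $\Bbbk^m$; but this is immediate from the full-rank property of $\Psi(\Ess)$.
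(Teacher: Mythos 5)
Your proposal matches the paper's own argument: count $m+|G|$ calls to the F3-slot using Proposition~\ref{prop:complexityarithmeticBM}'s bound $|G|=O(n+\min(m,n)m)$, charge $O(fm+m^2)$ per call, note that the projection preprocessing in the functional setting costs $O(fnm+nm^2)$ and is absorbed, and reuse Propositions~\ref{prop:timecplmonomialmanstandard} and~\ref{prop:timecplmonmialmanmatrix} unchanged for the monomial manipulations. The paper in fact compresses all of this into a short remark and does not spell out the transfer of Lemmas~\ref{lemma:iso}--\ref{lemma:isoGB} to the functional setting, so your slightly more careful treatment of that point (using that $\supp(B)\subseteq\Ess$ places $B$ inside $\pi^*(T)$, which is what actually drives the dimension count, rather than the full-rank condition alone) is a welcome clarification rather than a deviation.
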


\section{Discussion and future work}
The projection idea can be used for ideals defined by projective points and also 
in the non-commutative versions of the FGLM-algorithm, given in \cite{FGLMNC}. It should be remarked that the
monomial manipulations with respect to non-commuting variables is computationally harder than those for commuting variables.

Future work involves the question: Why do we need a Gr\"{o}bner basis for ideals defined by vanishing points? 
In the biological applications, where one is primary interested in normal form computations, 
it seems enough to compute a set $B$  such that $[B]$ is a $\Bbbk$-basis for $S/I$. 


\begin{thebibliography}{99}


\bibitem{Abbottetal}
J. Abbott, A. Bigatti, M. Kreuzer and L. Robbiano, Computing ideals of points, J. Symb. Comput. 30, nr 4, 2000, pp. 341-356.


\bibitem{FGLMNC}
M. A. Borges-Trenard and M. Borges-Quintana and T. Mora, Computing Gr\"{o}bner bases by FGLM techniques in a non-commutative setting, 
J. Symb. Comput. 30, nr 4, 2000, pp. 429-449.

\bibitem{BM}
B. Buchberger and M. M�ler, The construction of multivariate polynomials with preassigned zeroes, Computer
Algebra: EUROCAM .82 (J. Calmet, ed.), Lecture Notes in Computer Science 144, 1982, pp. 24-31.

\bibitem{FGLM}
J.C. Faug�e, P. Gianni, D. Lazard and T. Mora, Efficient Computation of Zero-Dimensional Gr\"{o}bner Basis by Change of Ordering. J. Symb. Comput. 16, nr 4, 1993, pp. 329-344.

\bibitem{furer} 
M. Frer, Faster integer multiplication. Proceedings of the 39th ACM STOC 2007 conference, 2007, pp. 57-66.

\bibitem{JustStigler06}
W. Just and  B. Stigler, Computing Gr\"{o}bner bases of ideals of few points in high dimensions, Communications in Computer Algebra 40, nr 3, 2006, pp. 65-96.

\bibitem{JustStigler07}
W. Just and B. Stigler, Efficiently computing Groebner bases of ideals of points, 2007, arXiv:0711.3475.

\bibitem{kuhnlemayr}
K. K\"{u}hnle and E. W. Mayr, Exponential space computation of Gr\"{o}bner bases,
ISSAC '96: Proceedings of the 1996 international symposium on Symbolic and algebraic computation, 1996, pp. 63-71.

\bibitem{LaubStigler04}
R. Laubenbacher, B. Stigler, A computational algebra approach to the reverse-engineering of gene regulatory networks. J. Theor. Biol. 229, 2004, pp. 523-537.

\bibitem{MMM}
M. Marinari, H. M. M�ler, and T. Mora, Gr\"{o}bner bases of ideals defined by functionals with an application to ideals of projective points, Applicable Algebra in Engineering, Communication and Computing 4, 1993, pp. 103-145.

\bibitem{Robbiano85}
L. Robbiano. Term orderings on the polynomial ring. In  Proceedings of EUROCAL'85, 1985, 513-517.


\end{thebibliography}
\end{document}